\documentclass[11pt]{amsart}

\textheight 220mm
\textwidth 150mm
\hoffset -16mm

\usepackage{amsmath,amssymb,amsthm}
\input xy
\xyoption{all}
\newtheorem{theorem}{Theorem}[section]

\newtheorem{lemm}[theorem]{Lemma}
\newtheorem{prop}[theorem]{Proposition}

\theoremstyle{definition}
\newtheorem{defi}[theorem]{Definition}

\newtheorem{exam}[theorem]{Example}

\newcommand{\PP}{\mathcal{P}}

\newcommand{\functor}[1]{{\overline{#1}}}
\newcommand{\TT}{\operatorname{\mathcal T}\nolimits}

\newcommand{\Fac}{\mathsf{Fac}\hspace{.01in}}

\newcommand{\fftors}{\mbox{\rm f-tors}\hspace{.01in}}
\newcommand{\sttilt}{\mbox{\rm s$\tau$-tilt}\hspace{.01in}}
\newcommand{\sttiltm}{\mbox{\rm s$\tau^-$-tilt}\hspace{.01in}}

\newcommand{\twosilt}{\mbox{\rm 2-silt}\hspace{.01in}}
\newcommand{\ctilt}{\mbox{\rm c-tilt}\hspace{.01in}}

\newcommand{\ffsttors}{\mbox{\rm $\nu$-f-tors}\hspace{.01in}}
\newcommand{\ssttilt}{\mbox{\rm $\nu$-s$\tau$-tilt}\hspace{.01in}}
\newcommand{\twotilt}{\mbox{\rm 2-tilt}\hspace{.01in}}
\newcommand{\sctilt}{\mbox{\rm self-c-tilt}\hspace{.01in}}

\newcommand{\C}{\mathcal{C}}

\newcommand{\thick}{\mathsf{thick}} 
\newcommand{\KKb}{\mathsf{K}^{\rm b}}
\newcommand{\La}{\Lambda}

\newcommand{\add}{\operatorname{add}\nolimits}
\newcommand{\proj}{\operatorname{proj}\nolimits}
\newcommand{\inj}{\operatorname{inj}\nolimits}

\newcommand{\Hom}{\operatorname{Hom}\nolimits}
\newcommand{\End}{\operatorname{End}\nolimits}
\newcommand{\Ext}{\operatorname{Ext}\nolimits}

\newcommand{\Tr}{\operatorname{Tr}\nolimits}
\newcommand{\RHom}{\mathbf{R}\strut\kern-.2em\operatorname{Hom}\nolimits}
\renewcommand{\mod}{\operatorname{mod}\nolimits}
\newcommand{\op}{\mathop{\mathrm{op}}\nolimits}

\begin{document}
\title{$\nu$-stable $\tau$-tilting modules}
\author{Yuya Mizuno}
\thanks{2000 { Mathematics Subject Classification.} 16G10}
\thanks{{ Key words and phrases.} selfinjective algebra, $\tau$-tilting, tilting complex, cluster tilting}
\thanks{The author is supported by Grant-in-Aid
for JSPS Fellowships No.23.5593.}
\address{Graduate School of Mathematics\\ Nagoya University\\ Frocho\\ Chikusaku\\ Nagoya\\ 464-8602\\ Japan}
\email{yuya.mizuno@math.nagoya-u.ac.jp}

\begin{abstract}
Inspired by $\tau$-tilting theory \cite{AIR}, 
we introduce the notion of $\nu$-stable support $\tau$-tilting modules. 
For any finite dimensional selfinjective algebra $\La$, we give bijections between two-term tilting complexes in $\KKb(\proj\La)$, $\nu$-stable support $\tau$-tilting $\La$-modules and $\nu$-stable functorially finite torsion classes in $\mod\La$.  
Moreover, these objects correspond bijectively to selfinjective cluster tilting objects in $\C$ if $\La$ is a 2-CY tilted algebra associated with a Hom-finite 2-CY triangulated category $\C$. 
We also study some properties of support $\tau$-tilting modules over 2-CY tilted algebras and we give a necessary condition such that algebras are 2-CY tilted in terms of support $\tau$-tilting modules.
\end{abstract}

\maketitle
\section{Introduction}
Derived categories are nowadays considered as an essential tool in the study of many
branches of mathematics. In the representation theory of algebras, derived equivalences of
algebras have been one of the central themes and extensively investigated. It is well-known that tilting complexes give derived equivalences \cite{R}.
The most fundamental class of tilting complexes are tilting modules. 
In the case of selfinjective algebras, however, tilting modules are only projective modules, 
so that the next meaningful class are two-term tilting complexes. 

One of the aim of this paper is, for selfinjective algebras, to give a one-to-one correspondence between two-term tilting complexes and certain nice class of modules. 
By this bijection, we can obtain all two-term tilting complexes from those modules, which can be easily calculated. 
For this purpose, we use $\tau$-tilting theory introduced by Adachi-Iyama-Reiten. 

In \cite{AIR}, the authors introduced the notion of support $\tau$-tilting modules. In particular, they  gave a bijection between support $\tau$-tilting modules over a finite dimensional algebra $\La$ and two-term silting complexes in $\KKb(\proj\La)$, which are generalization of tilting complexes and play significant roles in the study of $t$-structures and mutation theory. 
Unfortunately, silting complexes do not give derived equivalences in general. 
Therefore, from the viewpoint of derived equivalences, it is reasonable to ask which support $\tau$-tilting modules correspond to two-term tilting complexes. 

In this paper, we give a complete answer to the question for selfinjective algebras by introducing $\nu$-stable support $\tau$-tilting modules (Definition \ref{selfinjective}).
Moreover, inspired by results of \cite{AIR}, we extend the bijection to functorially finite  torsion classes and cluster tilting objects.

Our main result is the following theorem.

\begin{theorem}[Theorems \ref{selfinjective-tilt}, \ref{stor-sspt}, \ref{tilt-selfct} and \ref{selfinjective-selfct}]
Let $\Lambda$ be a finite dimensional selfinjective algebra. We have bijections between
\begin{itemize}
\item[(a)] the set $\twotilt\La$ of isomorphism classes of basic two-term tilting complexes in $\KKb(\proj\La)$,
\item[(b)] the set $\ssttilt\La$ of isomorphism classes of basic $\nu$-stable support $\tau$-tilting $\La$-modules,
\item[(c)] the set $\ffsttors\La$ of $\nu$-stable functorially finite torsion classes in $\mod\Lambda$,
\item[(d)] the set $\sctilt\C$ of isomorphism classes of basic selfinjective cluster tilting objects in a 2-CY triangulated category $\C$ if $\Lambda$ is an associated 2-CY tilted algebra to $\C$.
\end{itemize}
\end{theorem}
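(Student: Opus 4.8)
The plan is to build all four bijections on top of the maps of Adachi--Iyama--Reiten \cite{AIR}, recording throughout the action of the Nakayama functor $\nu$, which for a selfinjective $\La$ is an \emph{exact} autoequivalence of $\mod\La$ restricting to an autoequivalence of $\proj\La=\inj\La$. For (a)$\leftrightarrow$(b) I would start from the AIR bijection $\sttilt\La\leftrightarrow\twosilt\La$, $(M,P)\mapsto T_{(M,P)}$, obtained from minimal projective presentations. Since $\nu$ is exact and preserves $\proj\La$, it carries minimal projective presentations to minimal projective presentations, so that $\nu T_{(M,P)}\cong T_{\nu(M,P)}$ and the AIR bijection is $\nu$-equivariant. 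The whole point is then the following lemma, which I would prove first: \emph{a two-term silting complex $T\in\KKb(\proj\La)$ is tilting if and only if $\nu T\cong T$.} Its engine is the observation that $\nu$ is a Serre functor on $\KKb(\proj\La)$, i.e. $\Hom(X,Y)\cong D\Hom(Y,\nu X)$; this comes from the isomorphism $\Hom_\La(P,\nu Y)\cong D\Hom_\La(Y,P)$ for $P\in\proj\La$, extended to complexes. Granting it, Serre duality gives $\Hom(T,T[i])\cong D\Hom(T,\nu T[-i])$, so if $\nu T\cong T$ then the silting vanishing $\Hom(T,T[i])=0$ for $i>0$ forces the same for $i<0$, i.e. $T$ is tilting. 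Conversely, if $T$ is tilting the same formula yields $\Hom(T,\nu T[j])=0$ for all $j\neq0$; transporting $\nu T$ through the derived equivalence $\RHom(T,-)\colon\KKb(\proj\La)\xrightarrow{\sim}\KKb(\proj\End T)$ shows its image is a tilting complex concentrated in degree $0$, which I would argue is a progenerator, whence $\nu T\in\add T$ and, by comparing numbers of indecomposable summands, $\nu T\cong T$. Combining the lemma with $\nu$-equivariance identifies $\twotilt\La$ inside $\twosilt\La$ with $\ssttilt\La$ inside $\sttilt\La$.

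For (b)$\leftrightarrow$(c) I would use the AIR bijection $\sttilt\La\leftrightarrow\fftors\La$, $M\mapsto\Fac M$. Exactness of $\nu$ makes it send torsion classes to torsion classes, preserve functorial finiteness, and satisfy $\nu(\Fac M)=\Fac(\nu M)$; hence this bijection too is $\nu$-equivariant and restricts to a bijection $\ssttilt\La\leftrightarrow\ffsttors\La$. This step is essentially formal once exactness of $\nu$ is in place.

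For (b)/(d), assume $\La=\End_\C(T_0)$ for a cluster tilting object $T_0$ in a Hom-finite $2$-CY category $\C$. I would invoke the bijection between basic cluster tilting objects of $\C$ and $\sttilt\La$ given by $U\mapsto\Hom_\C(T_0,U)$, together with its compatibility with mutation and with the two-term tilting complexes realizing the derived equivalences $\La\sim\End_\C(U)$. It then remains to match the selfinjective cluster tilting objects --- those with $\End_\C(U)$ selfinjective --- with the $\nu$-stable support $\tau$-tilting modules of (b). Here I would use the $2$-CY duality $\Hom_\C(X,Y)\cong D\Hom_\C(Y,X[2])$ to transport the Serre structure of $\C$ through $\Hom_\C(T_0,-)$, identifying selfinjectivity of $\End_\C(U)$ with the $\nu$-stability criterion produced by the lemma of the first step.

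The main obstacle I expect lies in this last identification: one must transport the intrinsic $2$-CY duality of $\C$ accurately through the (non-exact, only partially defined) functor $\Hom_\C(T_0,-)$ to the Nakayama functor on $\mod\La$, and verify that selfinjectivity of $\End_\C(U)$ is \emph{equivalent} to --- not merely implied by --- $\nu$-stability of $\Hom_\C(T_0,U)$. A secondary delicate point is the converse direction of the key lemma, namely confirming that the degree-$0$ image of $\nu T$ is genuinely projective, so that $T$ tilting forces $\nu T\cong T$ for arbitrary two-term silting complexes rather than only for rigid indecomposable ones.
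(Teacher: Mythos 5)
Your overall architecture coincides with the paper's: take the AIR bijections of Theorem \ref{bijections}, show each one is compatible with the relevant degree-two symmetry ($\nu$ on modules and on complexes, $[2]$ on $\C$), and pass to fixed points. Steps (a)$\leftrightarrow$(b) and (b)$\leftrightarrow$(c) are correct as you describe them: your observation that $\nu$, being exact and preserving $\proj\La$, sends minimal projective presentations to minimal projective presentations is exactly the paper's Lemma \ref{nu tau-silt}, and your formal equivariance argument for torsion classes is the paper's proof of Theorem \ref{stor-sspt}. One point you elide: $\nu$-stability of a pair $(X,P)$ is defined by $\nu X\cong X$ alone, so equivariance of \emph{pairs} needs the extra fact $\nu P\cong P$; this is the paper's Lemma \ref{nu tau e}, and in your framing it follows by applying $\nu$ to the pair and invoking uniqueness of $P$ given $X$ (\cite[Proposition 2.3]{AIR}).

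The two places where you diverge from the paper are precisely the two obstacles you flag, and both are points the paper settles by citation. First, your ``key lemma'' (a basic two-term silting complex is tilting iff $\nu$-stable) is the paper's Theorem \ref{tilt eq}, cited from \cite{AR} and \cite{Ai} rather than proved. Your proposed Serre-duality proof is a legitimate self-contained alternative: the implication ($\nu T\cong T$ $\Rightarrow$ tilting) is complete as you state it, and the converse can be closed as you suspect, since selfinjectivity is a derived invariant (Rickard), so $\Gamma=\End(T)$ is selfinjective; then the degree-zero perfect complex $\RHom(T,\nu T)$ is a $\Gamma$-module of finite projective dimension, hence projective, hence (being a tilting complex) a progenerator, giving $\nu T\in\add T$, and comparing the number of indecomposable summands of the basic objects $T$ and $\nu T$ yields $\nu T\cong T$. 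Second, the ``main obstacle'' you name for (d) is resolved in the paper by two specific results you should make explicit: \cite[Proposition 3.6]{IO} (Proposition \ref{self ct}), which says a cluster tilting object $U$ is selfinjective if and only if $U\cong U[2]$ --- proved using exactly the 2-CY duality you invoke --- and the isomorphism $\overline{(-)}\circ[2]\cong\nu\circ\overline{(-)}$ on $\add T$ from \cite{KR} (Lemma \ref{CT eq}), which is the precise form of ``transporting $[2]$ to $\nu$.'' With these, selfinjectivity of $\End_\C(U)$ becomes the intrinsic condition $U\cong U[2]$, which splits over the decomposition $U=U'\oplus U''$ with $U''\in\add T[1]$ (the paper's Lemmas \ref{ct split} and \ref{silt split}) and translates summand-by-summand into $\nu$-stability of $(\overline{U'},\overline{U''[-1]})$; since every step is an equivalence of isomorphism conditions, you get the two-way implication you were worried about. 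So your plan is sound and essentially the paper's, but as written it stops short of the two concrete facts that turn it into a proof.
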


In particular, by the correspondence of (a) and (b), we can obtain all two-term tilting complexes from $\nu$-stable support $\tau$-tilting modules, which are given by simple calculations in the module category. 
Furthermore, we show that support $\tau$-tilting modules have particularly nice properties over 2-CY tilted algebras. These properties also provide a necessary condition such that algebras are 2-CY tilted.

\subsection*{Notations}
Let $K$ be an algebraically closed field and we denote by $D:=\Hom_K(-,K)$. 
By a finite dimensional algebra $\La$, we mean a basic finite dimensional algebra over $K$. 
All modules are right modules. We denote by $\mod\Lambda$ the category of finitely generated $\Lambda$-modules, by $\proj\La$ the category of finitely generated projective $\La$-modules, by $\inj\La$ the category of finitely generated injective $\La$-modules and 
by $\KKb(\proj\La)$ the homotopy category of bounded complexes of $\proj\La$. We denote  by $\add M$ the subcategory of $\mod\Lambda$ consisting of direct summands of finite direct sums of copies of $M$.
The composition $gf$ means first $f$, then $g$. 
For $X\in\mod\La$, we denote by $\Fac X$ the subcategory of $\mod\La$ consisting of all objects which are factor modules of finite direct sums of copies of $X$.

\section{Preliminaries}

In this section, we recall some definitions and results. 
Throughout this section, let $\La$ be a finite dimensional algebra.

\subsection{Support $\tau$-tilting modules}
We denote the \emph{AR translations} by  
\begin{eqnarray*}
\tau=D\Tr:\underline{\mod}\Lambda\to\overline{\mod}\Lambda\ \ \ \mbox{and}\ \ \ \tau^{-1}=\Tr D:\overline{\mod}\Lambda\to\underline{\mod}\Lambda.
\end{eqnarray*}
We refer to \cite{ARS} for the functors $\Tr:\underline{\mod}\Lambda\to\underline{\mod}\Lambda^{\op}$. 
Then we recall the definition of support $\tau$-tilting modules \cite{AIR}.

Let $(X,P)$ be a pair with $X\in\mod\Lambda$ and $P\in\proj\Lambda$.
\begin{itemize}
\item[(a)] We call $X$ in $\mod\Lambda$ {\it $\tau$-rigid} if $\Hom_{\Lambda}(X,\tau X)=0$. We call $(X,P)$ a {\it $\tau$-rigid} pair if $X$ is $\tau$-rigid and $\Hom_\Lambda(P,X)=0$. 
\item[(b)] We call $X$ in $\mod\Lambda$ {\it $\tau$-tilting} if $X$ is $\tau$-rigid and $|X|=|\Lambda|$, where $|X|$ denotes the number of nonisomorphic indecomposable direct summands of $X$. 
\item[(c)]We call $X$ in $\mod\Lambda$ {\it support $\tau$-tilting} if 
there exists an idempotent $e$ of $\La$ such that $X$ is a $\tau$-tilting $(\Lambda/\langle e\rangle)$-module. 
We call $(X,P)$ a {\it support $\tau$-tilting} pair if $(X,P)$ is $\tau$-rigid and $|X|+|P|=|\Lambda|$.
\end{itemize}
We say that $(X,P)$ is basic if $X$ and $P$ are basic.

By \cite[Proposition 2.3]{AIR}, 
$(X,P)$ is a $\tau$-rigid pair for $\Lambda$ if and only if
$X$ is a $\tau$-rigid $(\Lambda/\langle e\rangle)$-module, where $e$ is an idempotent of $\Lambda$ such that $\add P=\add e\Lambda $. 
Moreover, if $(X,P)$ and $(X,P')$ are support $\tau$-tilting pairs for $\La$, then we have $\add P=\add P'$. Thus, a basic support $\tau$-tilting module $X$ gives a basic support $\tau$-tilting pair $(X,P)$ uniquely. 
We denote by $\sttilt\La$ the set of isomorphism classes of basic support $\tau$-tilting pairs for $\La$.

\subsection{Torsion classes}
We call a full subcategory $\TT$ of $\mod\Lambda$ \emph{torsion class} if it is closed under factor modules and extensions. 
We say that $X\in\TT$ is \emph{$\Ext$-projective} if $\Ext^1_\Lambda(X,\TT)=0$ and  denote by $P(\TT)$ the direct sum of one copy of each of the indecomposable $\Ext$-projective objects in $\TT$ up to isomorphism. 
We denote by $\fftors\La$ the set of functorially finite torsion classes in $\mod\La$.

\subsection{Silting complexes}
We recall the definition of silting complexes \cite{AI,BRT,KV}. 

We call a complex $T\in\KKb(\proj\La)$ \emph{silting} (respectively, \emph{tilting})  
if $\Hom_{\KKb(\proj\La)}(T,T[i])=0$ for any positive integer $i>0$ (for any integer $i\neq0$) and satisfies $\KKb(\proj\La)=\thick{T}$, where $\thick T$ denote by the smallest thick subcategory of $\KKb(\proj\La)$ containing $T$. 

We call a complex $P=(P^{i},d^i)$ in $\KKb(\proj\La)$ \emph{two-term} if $P^{i}=0$ for all $i\neq 0,-1$. 
We denote by $\twosilt\Lambda$ (respectively, $\twotilt\Lambda$) the set of isomorphism classes of basic two-term silting (respectively, tilting) complexes in $\KKb(\proj\La)$.

\subsection{Cluster tilting objects}

Let $\C$ be a $K$-linear Hom-finite Krull-Schmidt triangulated category. 
Assume that $\C$ is \emph{2-Calabi-Yau} (\emph{2-CY} for short) i.e. there exists a functorial isomorphism $\Hom_{\C}(X,Y)\cong D\Hom_{\C}(Y,X[2])$. We call $T$ in $\C$ \emph{cluster tilting} if $\add T =\{ X\in \C \ |\ \Hom_{\C}(T, X[1])=0 \}$. These categories appear in the study of cluster category and cluster tilting objects play central roles in the categories \cite{BMRRT}. We denote by $\ctilt\C$ the set of isomorphism classes of basic cluster tilting objects in $\C$.

We recall the following useful results.

\begin{lemm}\label{CT eq}\cite{KR}
Let $T\in\C$ be an object and $\La:=\End_\C(T)$.
The functor $\overline{(-)}:=\Hom_\C(T,-)$ induces an equivalence of categories between $\add T$ (respectively, $\add T[2]$) and $\proj \Lambda$ (respectively, $\inj \Lambda$).
Moreover, we have an isomorphism $\overline{(-)}\circ[2]\cong\nu\circ\overline{(-)}:\add T\to\inj \Lambda$ of functors, where $\nu:= D\Hom_{\La}(-,\La)$.
\end{lemm}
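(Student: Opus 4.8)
The plan is to treat the three assertions in turn, reducing everything to the standard projectivization equivalence together with the 2-CY duality, the real work being the compatibility of the module structures with those identifications.

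First I would establish the equivalence $\overline{(-)}\colon\add T\xrightarrow{\sim}\proj\La$. Since $\overline{(-)}=\Hom_\C(T,-)$ is additive and sends $T$ to the regular right module $\Hom_\C(T,T)=\La$, it carries $\add T$ into $\add\La=\proj\La$ and is essentially surjective there. Full faithfulness on $\add T$ is the usual Yoneda-type argument: for $X,Y\in\add T$ the canonical map $\Hom_\C(X,Y)\to\Hom_\La(\overline X,\overline Y)$ is bijective, which one checks first for $X=Y=T$ (where both sides are $\La$) and then extends to all of $\add T$ by additivity. Applying the autoequivalence $[2]$ gives $\add T\simeq\add T[2]$, so it remains only to identify $\overline{(-)}$ on $\add T[2]$.

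Next I would prove the functor isomorphism of the last assertion, from which part concerning $\inj\La$ follows at once. Fix $X\in\add T$. On one hand, the 2-CY isomorphism $\Hom_\C(A,B)\cong D\Hom_\C(B,A[2])$ applied with $A=T$, $B=X[2]$, together with the fact that $[2]$ is an equivalence, yields
\[
\overline{X[2]}=\Hom_\C(T,X[2])\cong D\Hom_\C(X[2],T[2])\cong D\Hom_\C(X,T).
\]
On the other hand, by definition $\nu\overline X=D\Hom_\La(\overline X,\La)$, and the full faithfulness from the first step identifies $\Hom_\La(\overline X,\La)=\Hom_\La(\overline X,\overline T)\cong\Hom_\C(X,T)$, whence
\[
\nu\overline X\cong D\Hom_\C(X,T).
\]
Composing these two identifications gives $\overline{(-)}\circ[2]\cong\nu\circ\overline{(-)}$. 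The statement that $\overline{(-)}$ restricts to an equivalence $\add T[2]\to\inj\La$ is then formal: the Nakayama functor $\nu\colon\proj\La\to\inj\La$ is an equivalence for any finite-dimensional algebra, and up to the isomorphism just produced $\overline{(-)}\circ[2]$ factors as $\add T\xrightarrow{\overline{(-)}}\proj\La\xrightarrow{\nu}\inj\La$, a composite of equivalences; precomposing with $[-2]$ exhibits $\overline{(-)}\colon\add T[2]\to\inj\La$ as an equivalence.

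The main obstacle is not the existence of the displayed isomorphisms but their naturality in $X$ and their compatibility with the right $\La=\End_\C(T)$-action. Both the 2-CY pairing and the duality $D$ must be checked to respect the $\La$-module structures, so that the pointwise isomorphisms $\overline{X[2]}\cong D\Hom_\C(X,T)\cong\nu\overline X$ assemble into a genuine isomorphism of functors $\add T\to\inj\La$. Tracking the bimodule structure through the functorial 2-CY isomorphism is the only delicate point, the remainder of the argument being formal.
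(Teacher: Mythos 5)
Your proposal is correct, but note that the paper contains no proof of this lemma at all: it is stated as a quotation of Keller--Reiten \cite{KR}, so there is nothing internal to compare against. Your argument (projectivization $\add T\simeq\proj\La$ via the Yoneda-type full faithfulness, then the 2-CY duality giving $\overline{X[2]}\cong D\Hom_\C(X,T)\cong\nu\overline{X}$, then deducing the equivalence $\add T[2]\simeq\inj\La$ from invertibility of $\nu$ on $\proj\La$) is exactly the standard proof, and is how the result is established in the cited source. The one point you flag without carrying out---compatibility of the pointwise isomorphisms with the right $\La$-structure and their naturality in $X$---is indeed the only delicate step, and it is routine: the 2-CY isomorphism $\Hom_\C(A,B)\cong D\Hom_\C(B,A[2])$ is functorial in both variables, and naturality in the first variable with respect to endomorphisms $a\in\End_\C(T)$ is precisely the statement that the identification $\Hom_\C(T,X[2])\cong D\Hom_\C(X,T)$ intertwines precomposition by $a$ with the dual of postcomposition by $a$, i.e.\ that it is a map of right $\La$-modules; naturality in the second variable gives functoriality in $X$.
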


\begin{theorem}\cite{BMR,KR}\label{equivalence}
Let $T$ be a cluster tilting object and $\La:=\End_\C(T)$. 
There is an equivalence of categories
$$\Hom_\C(T,-):\C/[T[1]]\to\mod\La,$$
where $[T[1]]$ is the ideal of $\C$ consisting of morphisms which factor through $\add T[1]$.
\end{theorem}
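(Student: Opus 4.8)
The plan is to show that the functor $F:=\Hom_\C(T,-)$ (the functor $\overline{(-)}$ of Lemma \ref{CT eq}) is full and dense onto $\mod\La$ and that its kernel on morphism spaces is precisely the ideal $[T[1]]$; the induced functor $\overline{F}\colon\C/[T[1]]\to\mod\La$ is then full, faithful and dense, hence an equivalence. The engine of the whole argument is the observation that, since $T$ is cluster tilting, every $X\in\C$ fits into a triangle
\[
T_1\xrightarrow{\ g\ }T_0\xrightarrow{\ f\ }X\xrightarrow{\ h\ }T_1[1]
\]
with $T_0,T_1\in\add T$. Here I would take $f$ to be a right $\add T$-approximation of $X$, complete it to a triangle, and then verify $T_1\in\add T$ by applying $\Hom_\C(T,-)$: surjectivity of $f_*$ (the approximation property) together with $\Hom_\C(T,T_0[1])=0$ (rigidity) forces $\Hom_\C(T,T_1[1])=0$. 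Applying $F$ to this triangle and using $F(T_1[1])=0$ produces a projective presentation $FT_1\to FT_0\to FX\to0$ in $\mod\La$; Hom-finiteness of $\C$ makes $FX$ finite dimensional, so $F$ indeed lands in $\mod\La$.

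Density then comes essentially for free: given $M\in\mod\La$ with a projective presentation $P_1\to P_0\to M\to0$, I would lift it through the equivalence $F\colon\add T\xrightarrow{\sim}\proj\La$ of Lemma \ref{CT eq} to a morphism $g\colon T_1\to T_0$, complete $g$ to a triangle, and read off $FX\cong\operatorname{coker}(Fg)\cong M$.

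For the morphism spaces, fixing $X$ (with the triangle above) and $Y$, the plan is a diagram chase. I would apply $\Hom_\C(-,Y)$ to the triangle and $\Hom_\La(-,FY)$ to the projective presentation, and match the two using the isomorphisms $\Hom_\La(FT_i,FY)\cong\Hom_\C(T_i,Y)$ supplied by the equivalence on $\add T$. This identifies $\Hom_\La(FX,FY)$ with $\ker\!\big(g^*\colon\Hom_\C(T_0,Y)\to\Hom_\C(T_1,Y)\big)$, which by exactness of the triangle sequence equals $\Image\!\big(f^*\colon\Hom_\C(X,Y)\to\Hom_\C(T_0,Y)\big)$. Hence $F$ induces an isomorphism $\Hom_\C(X,Y)/\ker f^*\xrightarrow{\sim}\Hom_\La(FX,FY)$, and everything reduces to computing $\ker f^*$.

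Identifying $\ker f^*$ with $[T[1]](X,Y)$ is the step I expect to be the crux. One inclusion is immediate: exactness gives $\ker f^*=\Image\!\big(h^*\colon\Hom_\C(T_1[1],Y)\to\Hom_\C(X,Y)\big)$, and these morphisms factor through $T_1[1]\in\add T[1]$, so $\ker f^*\subseteq[T[1]](X,Y)$. For the reverse inclusion I would take an arbitrary $u=b\circ a$ with $a\colon X\to T'$, $T'\in\add T[1]$, and $b\colon T'\to Y$, and show $a$ factors through $h$. Applying $\Hom_\C(-,T')$ to the triangle shows that $a$ factors through $h$ exactly when $a\circ f=0$; but $a\circ f\in\Hom_\C(T_0,T')$ with $T_0\in\add T$ and $T'[-1]\in\add T$, and this group vanishes by the rigidity of $T$. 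Thus $u\in\Image h^*=\ker f^*$, giving $\ker f^*=[T[1]](X,Y)$ and completing the proof. The two genuinely geometric inputs are the cluster tilting property, which produces the $\add T$-triangle and thereby density, and the rigidity of $T$, which is what pins the kernel down to exactly $[T[1]]$.
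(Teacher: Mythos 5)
The paper gives no proof of this statement at all: it is quoted as a known theorem from \cite{BMR,KR}, so there is nothing internal to compare against line by line. Your argument is correct, and it is essentially a self-contained reconstruction of the standard proof (the one in Keller--Reiten): build the presentation triangle $T_1\to T_0\xrightarrow{f} X\xrightarrow{h} T_1[1]$ from a right $\add T$-approximation (your verification that the third term lies in $\add T[1]$, via surjectivity of $f_*$ plus rigidity of $T$ plus the cluster tilting property, is exactly right); observe that $F=\Hom_\C(T,-)$ turns it into a projective presentation; obtain density by lifting projective presentations through $\add T\simeq\proj\La$; and pin down $\ker F=\ker f^{*}=\Image h^{*}=[T[1]](X,Y)$, the last equality resting on $\Hom_\C(T_0,T')=0$ for $T'\in\add T[1]$, which is again rigidity. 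One justification should be tightened: the isomorphism $\Hom_\La(FT_i,FY)\cong\Hom_\C(T_i,Y)$ for \emph{arbitrary} $Y\in\C$ is not literally supplied by the equivalence $\add T\simeq\proj\La$ of Lemma~\ref{CT eq}, which only covers $Y\in\add T$; you need the standard projectivization (Yoneda) argument: for $T_i=T$ the map $\Hom_\C(T,Y)\to\Hom_\La(FT,FY)\cong\Hom_\La(\La,FY)\cong FY$ sends $\phi\mapsto\phi$, hence is bijective, and the general case follows by additivity over $\add T$. With that one line inserted, your identification of $\Hom_\La(FX,FY)$ with $\Image f^{*}$, and hence fullness and faithfulness of the induced functor, is complete. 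It is also worth noting that your proof never invokes the 2-CY hypothesis --- correctly so, since the equivalence holds for any cluster tilting object in a Hom-finite Krull--Schmidt triangulated category, which is precisely the generality in which \cite{KR} prove it.
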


\subsection{Bijections of \cite{AIR}}
Let $\La$ be a finite dimensional algebra.  
We recall results given by \cite{AIR}. For more details, we refer to the original paper. We also refer to independent works \cite{Ab,HKM,S} for a connection with torsion theory and tilting complexes.

We prepare notations. 
For a $\La$-module $X\in\mod\La$, take a minimal projective presentation
$$\xymatrix@C30pt@R30pt{P_X^1\ar[r]^g& P_X^0\ar[r]^{}&X\ar[r]& 0.}$$
We denote by $P_X:=({P_X^1}\overset{g}{\to} {P_X^0})\in\KKb(\proj\La)$. 

On the other hand, let $\C$ be a $K$-linear Hom-finite Krull-Schmidt 2-CY triangulated category and $T\in\C$ be a cluster tilting object. For an object $M\in\C$, we can take a triangle 
$$\xymatrix@C30pt@R30pt{T^1_M\ar[r]^{g}& T^0_M\ar[r]^{f}&M\ar[r]& T^1_M[1]}$$
where $T^0_M,T^1_M\in\add T$ and $f$ is a minimal right $(\add T)$-approximation.
We denote by $T_M:=({T^1_M}\overset{g}{\to} {T^0_M})$. 

Then we have the following results.

\begin{theorem}\cite{AIR}\label{bijections}
Let $\Lambda$ be a finite dimensional algebra.
\begin{itemize}
\item[(a)]There exists a bijection
\[\Phi:\sttilt\Lambda\longrightarrow\twosilt\Lambda,\ \ \ (X,P)\mapsto \Phi(X,P):=P_X\oplus P[1].\]

\item[(b)]There exists a bijection
\[\fftors\Lambda\longleftrightarrow\sttilt\Lambda\]
given by $\fftors\Lambda\ni\TT\mapsto P(\TT)\in\sttilt\Lambda$ and $\sttilt\La\ni X\mapsto\Fac X\in\fftors\Lambda$.
\end{itemize}

Moreover, let $\C$ be a $K$-linear Hom-finite Krull-Schmidt 2-CY triangulated category and $T\in\C$ a cluster tilting object. Assume that $\La=\End_\C(T)$. 
\begin{itemize}

\item[(c)]There exists a bijection
\[\Theta:\ctilt\C\longrightarrow\twosilt\Lambda,\ \ \ (M'\oplus M'')\mapsto\Theta(M'\oplus M''):=\overline{T_{M'}}\oplus \overline{M''[-1]}[1],\]
where $M''$ is a maximal direct summand of $M$ which belongs to $\add T[1]$.

\item[(d)]There exists a bijection
\[\Psi:\ctilt\C\longrightarrow\sttilt\Lambda,\ \ \ (M'\oplus M'')\mapsto\Psi(M'\oplus M''):=(\overline{M'}, \overline{M''[-1]}),\] where $M''$ is a maximal direct summand of $M$ which belongs to $\add T[1]$.

\end{itemize}
\end{theorem}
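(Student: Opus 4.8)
The plan is to establish the four bijections in the order (a), (b), (d), and then deduce (c) from (a) and (d) by a compatibility check. The technical heart lies in part (a), for which the crucial input is the homological formula
\[
\Hom_{\KKb(\proj\La)}(P_X,P_Y[1])\cong D\Hom_\La(Y,\tau X)
\]
valid for all $X,Y\in\mod\La$. I would prove this by writing $P_X=(P^1\xrightarrow{g}P^0)$ from the minimal projective presentation, computing $\Hom_{\KKb(\proj\La)}(P_X,P_Y[1])$ directly as the cokernel of $\Hom_\La(g,Y)\colon\Hom_\La(P^0,Y)\to\Hom_\La(P^1,Y)$ after dividing out the homotopies (which account for maps factoring through $g$ and through the presentation of $Y$), and then identifying this cokernel with $D\Hom_\La(Y,\tau X)$ through the defect formula for $\tau=D\Tr$.

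With this formula in hand I would prove (a) as follows. For a pair $(X,P)$ the higher self-extensions of $\Phi(X,P)=P_X\oplus P[1]$ in degrees $i\geq2$ vanish by the two-term shape, so only the degree-one self-Hom matters; it splits over the four ordered pairs of summands, giving $\Hom_{\KKb(\proj\La)}(P_X,P_X[1])\cong D\Hom_\La(X,\tau X)$ and $\Hom_{\KKb(\proj\La)}(P[1],P_X[1])\cong\Hom_\La(P,X)$ together with two automatically vanishing terms. Hence $\Phi(X,P)$ is presilting exactly when $(X,P)$ is a $\tau$-rigid pair. To upgrade presilting to silting I would invoke the silting-theoretic fact \cite{AI} that a two-term presilting complex is silting iff its number of indecomposable summands equals $|\La|$; since $|\Phi(X,P)|=|X|+|P|$, this matches the support $\tau$-tilting condition $|X|+|P|=|\La|$. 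Injectivity and surjectivity of $\Phi$ then follow from a normal-form argument: the stalk summands concentrated in degree $-1$ recover $P[1]$, and deleting them leaves a minimal two-term complex whose $0$-th cohomology recovers $X$ and which equals $P_X$.

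For (b) I would check that $\Fac X$ is a functorially finite torsion class (closure under factors is clear, while closure under extensions and functorial finiteness use that $X$ is a $\tau$-rigid module), that its $\Ext$-projective objects are exactly $\add X$, and that $P(\TT)$ is support $\tau$-tilting with $\Fac P(\TT)=\TT$; the two assignments are then mutually inverse, the key point being the identification of $\add P(\TT)$ with the $\Ext$-projective generators together with a Bongartz-type completion. For (d) I would pass through the equivalence $\Hom_\C(T,-)\colon\C/[T[1]]\xrightarrow{\sim}\mod\La$ of Theorem \ref{equivalence}: writing $M=M'\oplus M''$, the summand $M''[-1]\in\add T$ maps to $\overline{M''[-1]}\in\proj\La$ by Lemma \ref{CT eq}, supplying the projective part, while $\overline{M'}$ is the module part. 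The rigidity $\Hom_\C(M,M[1])=0$ translates, via this equivalence and the isomorphism $\overline{(-)}\circ[2]\cong\nu\circ\overline{(-)}$ of Lemma \ref{CT eq}, into $\tau$-rigidity of $\overline{M'}$ and the pair condition $\Hom_\La(\overline{M''[-1]},\overline{M'})=0$; the equality $|M|=|T|=|\La|$ gives the support $\tau$-tilting count, and bijectivity follows from the equivalence.

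Finally I would deduce (c) by verifying $\Theta=\Phi\circ\Psi$. Applying $\overline{(-)}$ to the approximation triangle $T^1_{M'}\to T^0_{M'}\to M'\to T^1_{M'}[1]$ and using the equivalence $\overline{(-)}\colon\add T\xrightarrow{\sim}\proj\La$ of Lemma \ref{CT eq} shows that $\overline{T_{M'}}$ is precisely the minimal projective presentation $P_{\overline{M'}}$ of $\overline{M'}$, whence $\Theta(M'\oplus M'')=P_{\overline{M'}}\oplus\overline{M''[-1]}[1]=\Phi(\Psi(M'\oplus M''))$, so $\Theta$ is a bijection as a composite of bijections. The main obstacle is the passage in part (a) from the presilting (self-orthogonality) condition to the genuine silting (generation) condition: establishing that the purely numerical equality $|X|+|P|=|\La|$ forces $\thick(\Phi(X,P))=\KKb(\proj\La)$ is exactly where the combinatorics of support $\tau$-tilting modules meets the generation theory of silting complexes, and it is the step that cannot be reduced to a formal $\Hom$-computation.
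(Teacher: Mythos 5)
A preliminary remark: the paper contains no proof of this statement. Theorem \ref{bijections} is recalled verbatim from \cite{AIR}, with the explicit comment ``For more details, we refer to the original paper,'' so there is no internal argument to compare with; your proposal can only be measured against the proof in \cite{AIR} itself. Measured that way, it is essentially a faithful reconstruction. Your key formula $\Hom_{\KKb(\proj\La)}(P_X,P_Y[1])\cong D\Hom_\La(Y,\tau X)$, obtained by identifying both sides with the cokernel of $\Hom_\La(P_X^0,Y)\to\Hom_\La(P_X^1,Y)$ via the transpose, is exactly the computational core of the original; the splitting of the degree-one self-Hom into four pieces, the normal form $Q\cong P_{H^0(Q')}\oplus R[1]$ for a minimal two-term complex, the torsion-class dictionary $X\mapsto\Fac X$, $\TT\mapsto P(\TT)$ resting on Auslander--Smal{\o}, the translation of rigidity through the equivalence $\C/[T[1]]\simeq\mod\La$ of Theorem \ref{equivalence}, and the identity $\Theta=\Phi\circ\Psi$ (using that $\overline{T_{M'}}$ is the minimal projective presentation of $\overline{M'}$, since $\Hom_\C(T,T^1_{M'}[1])=0$) are all the same steps as in the source.

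Two caveats, both at points where your sketch silently leans on a genuine theorem. First, the criterion ``a two-term presilting complex is silting if and only if it has $|\La|$ indecomposable summands'' is not in \cite{AI}: what \cite{AI} provides is that silting complexes have exactly $|\La|$ summands (the direction your surjectivity argument uses); the converse, which you need so that $\Phi(X,P)$ is actually silting and not merely presilting, is a proposition of \cite{AIR} itself, proved there by a Bongartz-type completion of two-term presilting complexes. You correctly flag this as the non-formal step, but the attribution is circular if the goal is to reprove \cite{AIR}. Second, in (d) the phrase ``bijectivity follows from the equivalence'' is too quick on surjectivity: the equivalence shows that $M$ is determined up to isomorphism by $(\overline{M'},\overline{M''[-1]})$ and lets you lift any support $\tau$-tilting pair to a rigid object $M$ with $|\La|$ indecomposable summands, but one must still show that this $M$ is cluster tilting, i.e. maximal. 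That step requires either the Zhou--Zhu theorem that maximal rigid objects in a 2-CY category admitting a cluster tilting object are cluster tilting, or a transport of the maximality of support $\tau$-tilting pairs back to $\C$ through the Hom-exact sequence, as in \cite{AIR}; it does not follow from the category equivalence alone.
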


\section{$\nu$-stable support $\tau$-tilting $\La$-modules}
Throughout this subsection, let $\La$ be a finite dimensional selfinjective algebra and we denote the Nakayama functor by $\nu:= D\Hom_{\La}(-,\La)$.

\subsection{Some definitions}
In this subsection, we introduce the notion of $\nu$-stable support $\tau$-tilting modules, $\nu$-stable torsion classes and selfinjective cluster tilting objects. 
 
The following notion is the main subject in this paper.
\begin{defi}\label{selfinjective}
Let $X$ (respectively, $(X,P)$) be a support $\tau$-tilting $\La$-module (support $\tau$-tilting pair for $\La$).
We call $X$ (respectively, $(X,P)$) \emph{$\nu$-stable} if $\nu X\cong X$.
\end{defi}

We denote by $\ssttilt\La$ the set of isomorphism classes of basic $\nu$-stable support $\tau$-tilting pairs for $\La$. 
Note that any support $\tau$-tilting $\La$-module is clearly $\nu$-stable if $\La$ is symmetric. 

Moreover, we call a torsion class $\TT$ \emph{$\nu$-stable} if $\nu(\TT)=\TT$, and  
we denote by $\ffsttors\La$ the set of $\nu$-stable functorially finite torsion classes in $\mod\La$.

Let $\C$ be a $K$-linear Hom-finite Krull-Schmidt 2-CY triangulated category. 
We call a cluster tilting object $X$ \emph{selfinjective} if 
$\End_\C(X)$ is selfinjective and we denote by $\sctilt\C$ the set of isomorphism classes of basic selfinjective cluster tilting objects in $\C$. 

We give the following equivalent conditions of selfinjective cluster tilting objects.

\begin{prop}\cite[Proposition 3.6.]{IO}\label{self ct}
Let $X\in\C$ be a cluster tilting object in $\C$. Then the following conditions are equivalent.
\begin{itemize}
\item[(a)]$X$ is selfinjective.
\item[(b)]$X\cong X[2]$.
\item[(c)]$\Hom_{\C}(X[1],X)=0$.
\end{itemize}
\end{prop}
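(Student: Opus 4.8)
The plan is to prove the two equivalences $(b)\Leftrightarrow(c)$ and $(a)\Leftrightarrow(b)$, which together establish the proposition. Throughout write $\La=\End_\C(X)$ and $\overline{(-)}=\Hom_\C(X,-)$. The tools I would lean on are the $2$-CY duality $\Hom_\C(A,B)\cong D\Hom_\C(B,A[2])$, the defining property $\add X=\{Y\in\C\mid\Hom_\C(X,Y[1])=0\}$ of a cluster tilting object, and Lemma~\ref{CT eq}, which identifies $\proj\La$ with the essential image $\overline{\add X}$ and $\inj\La$ with $\overline{\add X[2]}$. The idea is that $(b)\Leftrightarrow(c)$ can be settled entirely inside $\C$, while $(a)\Leftrightarrow(b)$ amounts to transporting the classical criterion ``$\La$ selfinjective $\iff\proj\La=\inj\La$'' back and forth along $\overline{(-)}$.

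For $(b)\Rightarrow(c)$, assuming $X\cong X[2]$ I would simply compute
$$\Hom_\C(X[1],X)\cong\Hom_\C(X[1],X[2])\cong\Hom_\C(X,X[1])=0,$$
the last equality being rigidity of $X$. For the converse $(c)\Rightarrow(b)$, I would first feed $(c)$ through the $2$-CY isomorphism: shifting and dualizing,
$$\Hom_\C(X,X[3])\cong D\Hom_\C(X[3],X[2])\cong D\Hom_\C(X[1],X)=0,$$
so $\Hom_\C(X,X[2][1])=0$ and hence $X[2]\in\add X$ by the cluster tilting characterisation. Since $[2]$ is an autoequivalence of $\C$ it preserves basicness and the number of nonisomorphic indecomposable summands; thus $X[2]\in\add X$ together with $|X[2]|=|X|$ forces $\add X[2]=\add X$, i.e.\ $X\cong X[2]$.

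For $(a)\Leftrightarrow(b)$ I would use $\proj\La=\overline{\add X}$ and $\inj\La=\overline{\add X[2]}$ from Lemma~\ref{CT eq}. The implication $(b)\Rightarrow(a)$ is then immediate: if $X\cong X[2]$ then $\overline{\add X}=\overline{\add X[2]}$, so $\proj\La=\inj\La$ and $\La$ is selfinjective. For $(a)\Rightarrow(b)$, selfinjectivity gives $\proj\La=\inj\La$, hence $\overline{\add X}=\overline{\add X[2]}$; so for each indecomposable summand $X_i$ of $X$ the module $\overline{X_i}$ is projective-injective, whence $\overline{X_i}\cong\overline{Z}$ for some indecomposable $Z\in\add X[2]$, and symmetrically each summand of $X[2]$ matches a summand of $X$.

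The step I expect to be the main obstacle is precisely this last reflection from $\mod\La$ back to $\C$ in $(a)\Rightarrow(b)$: the functor $\overline{(-)}$ is an equivalence only after passing to the quotient $\C/[X[1]]$ (Theorem~\ref{equivalence}), so $\overline{X_i}\cong\overline{Z}$ a priori yields $X_i\cong Z$ only in $\C/[X[1]]$. To upgrade this to an honest isomorphism in $\C$ I would verify that neither $X$ nor $X[2]$ has an indecomposable summand in $\add X[1]$: a summand $X_i\cong Y[1]$ of $X$ with $Y\in\add X$ would give $0\neq\Hom_\C(X,X_i)=\Hom_\C(X,Y[1])=0$, and a summand $X_i[2]\cong Y[1]$ of $X[2]$ would force $X_i[1]\in\add X$, again contradicting $\Hom_\C(X,X[1])=0$. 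With no $\add X[1]$-summands present, the summands of $X$ and $X[2]$ are genuine indecomposables of $\C$ whose isomorphism classes are detected by the quotient equivalence, so $\add X=\add X[2]$ and hence $X\cong X[2]$. (Alternatively one can shortcut this using $\overline{X[2]}\cong\nu\overline{X}=\nu\La\cong\La$ for basic selfinjective $\La$, again reflecting along the same quotient equivalence.)
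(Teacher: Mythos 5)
This proposition is nowhere proved in the paper: it is quoted verbatim from \cite[Proposition 3.6]{IO}, so there is no internal argument to compare yours against, and I judge it on its own merits. Your proof is correct. The implication (b)$\Rightarrow$(c) is the evident shift-plus-rigidity computation; (c)$\Rightarrow$(b) correctly combines the 2-CY duality $\Hom_{\C}(X,X[3])\cong D\Hom_{\C}(X[1],X)$ with the defining property $\add X=\{Y\mid\Hom_{\C}(X,Y[1])=0\}$ to get $X[2]\in\add X$, and the counting argument then forces $\add X[2]=\add X$, hence $X\cong X[2]$. For (a)$\Leftrightarrow$(b) you reduce correctly to $\proj\La=\inj\La$ via Lemma \ref{CT eq}, and, most importantly, you isolated the one genuinely delicate step: an isomorphism $\overline{X_i}\cong\overline{Z}$ in $\mod\La$ a priori only gives $X_i\cong Z$ in the quotient $\C/[X[1]]$ of Theorem \ref{equivalence}, and your verification that neither $X$ nor $X[2]$ has an indecomposable summand in $\add X[1]$ (both ruled out by $\Hom_{\C}(X,X[1])=0$) is exactly what lets the Krull--Schmidt property of $\C$ reflect the isomorphism back from the quotient. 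Your parenthetical shortcut, $\overline{X[2]}\cong\nu\overline{X}=\nu\La\cong D\La$, so that selfinjectivity of the basic algebra $\La$ is literally the condition $\overline{X}\cong\overline{X[2]}$, is in fact the standard published argument and is cleaner, since it avoids matching indecomposable summands one at a time; either route is fine once the reflection step above is in place.

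One caveat, which concerns the statement rather than your argument: both (c)$\Rightarrow$(b) and (a)$\Rightarrow$(b) require $X$ to be basic, and you do invoke basicness without it being granted by the hypotheses. With the paper's definition of cluster tilting (which only constrains $\add X$), a non-basic $X$ could satisfy (a) and (c) yet fail (b): if $[2]$ permutes the indecomposable summands nontrivially (equivalently, the Nakayama permutation of $\End_{\C}(X)$ is nontrivial, as happens for the algebra of Example \ref{exam2}), then unequal multiplicities along a $[2]$-orbit preserve (a) and (c), which only depend on $\add X$, but destroy $X\cong X[2]$. This is harmless in context, since every application of Proposition \ref{self ct} in the paper is to basic objects (the sets $\ctilt\C$ and $\sctilt\C$ consist of basic objects), but your proof should state explicitly that $X$ is assumed basic.
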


Furthermore, we recall a characterization such that silting complexes over a selfinjective algebra become tilting complexes as follows.

\begin{theorem}\label{tilt eq}\cite[Theorem 2.1]{AR}\cite[Theorem A.4]{Ai}
Let $\La$ be a finite dimensional selfinjective algebra and $P$ be a basic two-term silting complex in $\KKb(\proj\La)$. 
Then the following are equivalent.
\begin{itemize}
\item[(a)] $P$ is a tilting complex.
\item[(b)] $P\cong\nu P$ in $\KKb(\proj\La)$.
\item[(c)]$\Hom_{\KKb(\proj\La)}(P[1],P)=0$.
\end{itemize}
\end{theorem}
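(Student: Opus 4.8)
The plan is to prove the equivalence of the three conditions by setting up a cycle of implications, using the special structure of two-term complexes over a selfinjective algebra together with the Serre duality provided by the Nakayama functor $\nu$. The key technical fact I would exploit is that for a selfinjective algebra, $\nu$ is an exact autoequivalence of $\mod\La$ and restricts to a triangulated autoequivalence of $\KKb(\proj\La)$ sending $\proj\La$ to $\proj\La$ (since projectives and injectives coincide). Moreover, $\nu$ implements Auslander--Reiten--Serre duality, giving a functorial isomorphism $\Hom_{\KKb(\proj\La)}(P,Q) \cong D\Hom_{\KKb(\proj\La)}(Q,\nu P)$ for complexes of projectives. This duality will be the engine driving the passage between the Hom-vanishing condition in (c) and the self-duality condition in (b).

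The implication (b)$\Rightarrow$(a) is where I would start, since it is the heart of the matter. Assuming $P\cong\nu P$, I would show $P$ is tilting by verifying the only missing condition beyond silting, namely $\Hom_{\KKb(\proj\La)}(P,P[i])=0$ for $i<0$; because $P$ is two-term, the only potentially nonzero negative degree is $i=-1$. Here I would apply the Serre-type duality to rewrite $\Hom_{\KKb(\proj\La)}(P,P[-1])\cong D\Hom_{\KKb(\proj\La)}(P[-1],\nu P)\cong D\Hom_{\KKb(\proj\La)}(P,\nu P[1])$, and then substitute $\nu P\cong P$ to identify this with $D\Hom_{\KKb(\proj\La)}(P,P[1])$, which vanishes by the silting hypothesis. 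Thus the negative Ext also vanishes and $P$ becomes tilting. For (a)$\Rightarrow$(b), I would use that a tilting complex $P$ satisfies $\Hom_{\KKb(\proj\La)}(P,P[i])=0$ for all $i\neq 0$; since $\nu$ is a self-equivalence preserving the thick subcategory generated, $\nu P$ is again a two-term tilting complex with $\End(\nu P)\cong\End(P)$, and one shows $P$ and $\nu P$ generate the same derived category and have isomorphic endomorphism rings, forcing $\nu P\cong P$ by uniqueness properties of two-term silting complexes with a fixed $g$-vector or by the rigidity of tilting objects.

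For the equivalence of (c) with the others, I would connect it to (a) directly. The vanishing $\Hom_{\KKb(\proj\La)}(P[1],P)=0$ is exactly the negative-degree Hom vanishing $\Hom_{\KKb(\proj\La)}(P,P[-1])=0$ read in the opposite direction, so (c) together with silting immediately yields the tilting condition, giving (c)$\Rightarrow$(a); conversely a tilting complex satisfies all off-diagonal Hom-vanishing and in particular (c). This makes (a) and (c) nearly tautologically equivalent once one observes that two-term complexes have no Hom in degrees below $-1$. The genuine content therefore lives in linking the self-duality (b) to either of these through Serre duality.

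The main obstacle I anticipate is establishing the Auslander--Reiten--Serre duality isomorphism in the precise form needed for bounded complexes of projectives over a selfinjective algebra, and ensuring it is natural enough to commute with the shift and with the identification $\nu P\cong P$. One must be careful that $\nu$ on $\KKb(\proj\La)$ is the correct derived Nakayama functor and that the duality formula holds without extra $\operatorname{Ext}$ correction terms, which relies crucially on selfinjectivity (so that $\La$ has finite injective dimension zero as a module over itself and the Nakayama functor is exact). The cleanest route is likely to invoke the cited results of Aihara--Iyama or the relevant AR-duality for selfinjective algebras rather than rederiving the duality from scratch, thereby reducing the proof to the bookkeeping of shifts described above.
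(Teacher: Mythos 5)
First, a structural remark: the paper does not prove Theorem \ref{tilt eq} at all --- it imports it from \cite[Theorem 2.1]{AR} and \cite[Theorem A.4]{Ai}. So your proposal must be judged against those external proofs, not against anything in the text.

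Your easy directions are correct. For a two-term complex the Hom-complex is concentrated in degrees $-1,0,1$, so $\Hom_{\KKb(\proj\La)}(P,P[i])=0$ automatically for $|i|\geq 2$; hence (a) and (c) are equivalent for a two-term silting complex essentially by definition, as you say. Your (b)$\Rightarrow$(a) argument via the duality $\Hom_{\KKb(\proj\La)}(X,Y)\cong D\Hom_{\KKb(\proj\La)}(Y,\nu X)$ is also correct, and it is exactly the argument used in the appendix of \cite{Ai} for that direction.

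The genuine gap is (a)$\Rightarrow$(b), and both mechanisms you offer for it fail. The principle ``$P$ and $\nu P$ generate the same category and have isomorphic endomorphism rings, hence are isomorphic'' is false: $\La$ and $\La[1]$ are both basic two-term tilting complexes with endomorphism ring $\La$ and with thick closure all of $\KKb(\proj\La)$, yet they are not isomorphic; more generally, every nontrivial element of the derived Picard group produces such pairs, so no such rigidity statement exists. The fallback --- uniqueness of a two-term silting complex with a given class in $K_0(\KKb(\proj\La))$ --- is a genuine theorem of \cite{AIR}, but to invoke it you must first prove $[\nu P]=[P]$, and this is not free: $\nu$ acts on $K_0(\KKb(\proj\La))$ by the Nakayama permutation of the classes $[e_i\La]$, which is nontrivial whenever $\La$ is not weakly symmetric, so $\nu_*[P]=[P]$ is essentially the assertion to be proved and the argument is circular. (Note also that the desired conclusion $\nu P\cong P$ only says $\nu$ permutes the indecomposable summands of $P$, not that it fixes each one, so nothing can be argued summand by summand.) The actual proof in \cite{AR} requires input of a different nature: with $\Gamma:=\End_{\KKb(\proj\La)}(P)$ and $F\colon D^{\rm b}(\mod\La)\to D^{\rm b}(\mod\Gamma)$ the derived equivalence satisfying $F(P)\cong\Gamma$, one uses (i) that standard derived equivalences commute with the Serre/Nakayama functors, (ii) that selfinjectivity is a derived invariant, so $\Gamma$ is again selfinjective, and (iii) that $D\Gamma\cong\Gamma$ as right modules for a basic selfinjective $\Gamma$; then $F(\nu P)\cong\nu_\Gamma F(P)\cong D\Gamma\cong\Gamma\cong F(P)$, whence $\nu P\cong P$. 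This cannot be recovered by the shift bookkeeping in your proposal: formally, Serre duality plus (c) only yields $\Hom_{\KKb(\proj\La)}(P,\nu P[1])=0$, i.e.\ $P\geq\nu P$ in the silting order, and the reverse inequality needed to force $P\cong\nu P$ by antisymmetry does not follow from these manipulations.
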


\subsection{Connection with two-term tilting complexes}
In this subsection, we will show that $\nu$-stable support $\tau$-tilting modules correspond bijectively to two-term tilting complexes in $\KKb(\proj\La)$. 
While two-term tilting complexes are defined in $\KKb(\proj\La)$, 
$\nu$-stable support $\tau$-tilting modules are defined in $\mod\La$ and, therefore, easy to calculate.

Let us start with the following easy observation, which give a bijection between $\nu$-stable functorially finite  torsion classes in $\mod\La$ and $\nu$-stable support $\tau$-tilting $\La$-modules.

\begin{theorem}\label{stor-sspt}
The bijection of Theorem \ref{bijections} (b) induces a bijection 
$$\ffsttors\La\longleftrightarrow\ssttilt\La.$$

\end{theorem}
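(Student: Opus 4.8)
The plan is to exploit the fact that, because $\La$ is selfinjective, the Nakayama functor $\nu = D\Hom_\La(-,\La)$ is an \emph{exact autoequivalence} of $\mod\La$. Indeed, selfinjectivity makes $\La$ injective as a right module, so $\Hom_\La(-,\La)\colon\mod\La\to\mod\La^{\op}$ is exact, and composing with the duality $D$ shows that $\nu$ is an exact equivalence which permutes the indecomposable (projective-injective) modules. The whole argument rests on the compatibility of $\nu$ with the two mutually inverse assignments $\TT\mapsto P(\TT)$ and $X\mapsto\Fac X$ that define the bijection of Theorem \ref{bijections}(b).

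First I would record the consequences of $\nu$ being an exact autoequivalence. Since $\nu$ is exact and essentially surjective (and $\nu^{-1}$ is again exact), it sends a torsion class $\TT$, closed under factor modules and extensions, to another torsion class $\nu\TT$, and it preserves functorial finiteness because it is an equivalence. Because $\nu$ induces isomorphisms $\Ext^1_\La(X,Y)\cong\Ext^1_\La(\nu X,\nu Y)$, an object $X\in\TT$ is $\Ext$-projective in $\TT$ if and only if $\nu X$ is $\Ext$-projective in $\nu\TT$; as $\nu$ also preserves indecomposability, this yields the identity
$$\nu\,P(\TT)\cong P(\nu\TT).$$
Dually, exactness of $\nu$ preserves epimorphisms, so it commutes with the formation of factor modules, giving
$$\nu(\Fac X)=\Fac(\nu X)$$
for every $X\in\mod\La$.

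With these two identities in hand, the restriction of the bijection is immediate. If $\TT\in\ffsttors\La$, that is $\nu\TT=\TT$, then $\nu\,P(\TT)\cong P(\nu\TT)=P(\TT)$, so the associated support $\tau$-tilting module is $\nu$-stable. Conversely, if the module part $X=P(\TT)$ satisfies $\nu X\cong X$, then from $\Fac X=\TT$ we obtain $\nu\TT=\nu(\Fac X)=\Fac(\nu X)=\Fac X=\TT$, so $\TT$ is $\nu$-stable. Since the assignments $\TT\mapsto P(\TT)$ and $X\mapsto\Fac X$ are mutually inverse by Theorem \ref{bijections}(b), and $\nu$-stability of a pair only constrains its module part, this shows that they restrict to mutually inverse bijections between $\ffsttors\La$ and $\ssttilt\La$.

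The routine but essential points to check are the exactness of $\nu$ over a selfinjective algebra and the $\Ext$-compatibility $\Ext^1_\La(X,Y)\cong\Ext^1_\La(\nu X,\nu Y)$. I expect the only genuine subtlety to be verifying that $\nu$ sends the indecomposable $\Ext$-projective objects of $\TT$ \emph{exactly onto} those of $\nu\TT$, so that $P(-)$ truly intertwines with $\nu$; this is the step where the full equivalence property, rather than mere exactness, is required.
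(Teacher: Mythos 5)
Your proposal is correct and follows essentially the same route as the paper: the forward direction via $\nu(\Fac X)=\Fac(\nu X)$, and the converse via the $\Ext^1$-compatibility of $\nu$ (the paper writes it as $\Ext^1_\La(-,\nu(\TT))\cong\Ext^1_\La(\nu^-(-),\TT)$, which is the same fact you use), giving $\nu P(\TT)\cong P(\nu\TT)=P(\TT)$. Your write-up simply spells out in more detail what the paper's short proof leaves as "clear" or "easy".
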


\begin{proof}
Let $X$ be a basic $\nu$-stable support $\tau$-tilting $\Lambda$-module. 
Then it is clear that $\Fac X\cong\nu(\Fac X)$.
Conversely, let $\TT$ be a $\nu$-stable functorially finite torsion class in $\mod\Lambda$.
Then we have $\Ext_\La^1(-,\nu(\TT))\cong\Ext_\La^1(\nu^-(-),\TT)$.
Since we have $\TT=\nu(\TT)$, it is easy to obtain $P(\TT)=P(\nu(\TT))=\nu(P(\TT))$.
\end{proof}

Next, we will prove the following result. 

\begin{theorem}\label{selfinjective-tilt}
The bijection of Theorem \ref{bijections} (a) induces a bijection 
$$\ssttilt\La\longleftrightarrow\twotilt\La.$$
\end{theorem}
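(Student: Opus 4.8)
The plan is to exploit that $\Phi$ of Theorem \ref{bijections}(a) is already a bijection $\sttilt\La\to\twosilt\La$, so it suffices to check that, for a basic support $\tau$-tilting pair $(X,P)$, the two-term silting complex $\Phi(X,P)=P_X\oplus P[1]$ is a \emph{tilting} complex precisely when $(X,P)$ is $\nu$-stable. By Theorem \ref{tilt eq}, $\Phi(X,P)$ is tilting if and only if $\nu\Phi(X,P)\cong\Phi(X,P)$ in $\KKb(\proj\La)$, so the whole problem reduces to understanding the action of $\nu$ on the image of $\Phi$.

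The technical heart is to prove that $\Phi$ is $\nu$-equivariant, i.e. $\nu\Phi(X,P)\cong\Phi(\nu X,\nu P)$ in $\KKb(\proj\La)$. Since $\nu$ commutes with the shift and acts termwise on complexes of projectives, this amounts to the single identity $\nu P_X\cong P_{\nu X}$, where $P_X=({P_X^1}\overset{g}{\to}{P_X^0})$ comes from a minimal projective presentation of $X$. The key observation, and the step I expect to be the main obstacle, is that for a selfinjective algebra $\La$ the regular module $\La$ is injective, so $\Hom_\La(-,\La)$ is exact and hence $\nu=D\Hom_\La(-,\La)$ is an \emph{exact} autoequivalence of $\mod\La$. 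An exact equivalence preserves the radical, sends projective covers to projective covers, and commutes with syzygies; therefore applying $\nu$ termwise to ${P_X^1}\overset{g}{\to}{P_X^0}\to X\to 0$ yields the minimal projective presentation of $\nu X$, giving $\nu P_X\cong P_{\nu X}$. I would spell out here that $\nu g$ remains a radical morphism (so the resulting two-term complex stays minimal) and that $\nu P_X^0\to\nu X$ and $\nu P_X^1\to\Omega(\nu X)=\nu(\Omega X)$ are again projective covers; the exactness of $\nu$ is exactly what makes both minimality conditions survive.

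With $\nu$-equivariance in hand the conclusion is formal. Using injectivity of $\Phi$, the isomorphism $\nu\Phi(X,P)\cong\Phi(X,P)$ is equivalent to $(\nu X,\nu P)\cong(X,P)$, i.e. to $\nu X\cong X$ together with $\nu P\cong P$. It remains to discard the condition on $P$: since the projective part of a support $\tau$-tilting pair is uniquely determined by its module part (as recalled before Theorem \ref{bijections}), the isomorphism $\nu X\cong X$ already forces $\add(\nu P)=\add P$, whence $\nu P\cong P$ as both are basic. Thus $\Phi(X,P)$ is tilting if and only if $\nu X\cong X$, that is, if and only if $(X,P)$ is $\nu$-stable; since every tilting complex is silting and hence lies in the image of $\Phi$, the restriction of $\Phi$ is the desired bijection $\ssttilt\La\leftrightarrow\twotilt\La$.
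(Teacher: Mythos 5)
Your proposal is correct and takes essentially the same approach as the paper's proof: both reduce the statement via Theorem \ref{tilt eq} to checking $\nu$-invariance of $\Phi(X,P)$ in $\KKb(\proj\La)$, and both rest on the same key fact that, $\La$ being selfinjective, $\nu$ is an exact autoequivalence and therefore carries minimal projective presentations to minimal projective presentations, i.e. $\nu P_X\cong P_{\nu X}$ (this is exactly the content of the paper's Lemma \ref{nu tau-silt}). The only difference is bookkeeping: where you phrase things as $\nu$-equivariance of $\Phi$ plus injectivity and the uniqueness of the projective part $P$ (both of which implicitly use the routine fact that $(\nu X,\nu P)$ is again a support $\tau$-tilting pair, valid because the exact autoequivalence $\nu$ commutes with $\tau$), the paper argues the two directions separately, proving $\nu P\cong P$ directly by a sincerity argument (Lemma \ref{nu tau e}) and handling the converse with the splitting Lemma \ref{silt split}.
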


For a proof, we give some lemmas.

\begin{lemm}\label{nu tau-silt}
Take $X\in\mod\La$. The following are equivalent.
\begin{itemize}
\item[(a)]$X\cong\nu X$ in $\mod\La$.
\item[(b)]$P_X\cong\nu P_X$ in $\KKb(\proj\La)$.
\end{itemize}
\end{lemm}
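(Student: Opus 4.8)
The plan is to reduce everything to the single identification $\nu P_X\cong P_{\nu X}$ in $\KKb(\proj\La)$, after which both implications follow formally.

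First I would recall that for a selfinjective algebra $\La$ the Nakayama functor $\nu=D\Hom_\La(-,\La)$ is an exact autoequivalence of $\mod\La$ which restricts to an autoequivalence of $\proj\La$, since $\proj\La=\inj\La$ in the selfinjective case. In particular $\nu$ is exact and preserves projective covers.

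Next, starting from the minimal projective presentation $P_X^1\xrightarrow{g}P_X^0\to X\to 0$, I would apply $\nu$. By exactness this yields an exact sequence $\nu P_X^1\xrightarrow{\nu g}\nu P_X^0\to\nu X\to 0$ with each $\nu P_X^i$ projective. Since $\nu$ is an autoequivalence it carries the projective cover $P_X^0\to X$ to the projective cover $\nu P_X^0\to\nu X$ and, likewise, carries a minimal presentation to a minimal presentation. Hence $\nu P_X=(\nu P_X^1\xrightarrow{\nu g}\nu P_X^0)$ is precisely the complex $P_{\nu X}$ attached to the minimal projective presentation of $\nu X$, that is $\nu P_X\cong P_{\nu X}$ in $\KKb(\proj\La)$.

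With this identification in hand the two implications are immediate. For (a)$\Rightarrow$(b): if $X\cong\nu X$, then their minimal projective presentations agree, so $P_X\cong P_{\nu X}\cong\nu P_X$. For (b)$\Rightarrow$(a): the zeroth cohomology $H^0\colon\KKb(\proj\La)\to\mod\La$, sending a two-term complex to the cokernel of its differential, is a well-defined functor with $H^0(P_X)=X$ and $H^0(P_{\nu X})=\nu X$; applying it to $P_X\cong\nu P_X\cong P_{\nu X}$ yields $X\cong\nu X$ in $\mod\La$.

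The main point to verify carefully is that $\nu$ sends the minimal projective presentation of $X$ to the minimal projective presentation of $\nu X$ — concretely, that $\nu$ preserves projective covers (equivalently, superfluous epimorphisms) and hence minimality, so that $\nu P_X$ has no contractible direct summand and genuinely coincides with $P_{\nu X}$ rather than merely being homotopy equivalent to some larger presentation. This is exactly where selfinjectivity is used, through $\nu(\proj\La)=\proj\La$ and the exactness of $\nu$; the cohomology argument for (b)$\Rightarrow$(a) is then routine.
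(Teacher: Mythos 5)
Your proof is correct and follows essentially the same route as the paper: apply $\nu$ to the minimal projective presentation of $X$ and use selfinjectivity (exactness of $\nu$ and $\nu(\proj\La)=\proj\La$) to see that the result is the minimal projective presentation of $\nu X$, i.e.\ $\nu P_X\cong P_{\nu X}$. Your explicit use of $H^0$ for the direction (b)$\Rightarrow$(a) merely spells out what the paper's ``thus'' leaves implicit, and is a welcome precision since the isomorphism in (b) is a priori only a homotopy equivalence.
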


\begin{proof}
Take a minimal projective presentation of $X$
$$\xymatrix@C30pt@R30pt{P_X^1\ar[r]& P_X^0\ar[r]^{}&X\ar[r]& 0.}$$
Then applying the functor $\nu$, 
we have the following exact sequence
$$\xymatrix@C30pt@R30pt{\nu P_X^1\ar[r]&\nu P_X^1\ar[r]^{}&\nu X\ar[r]& 0.}$$
Since $\La$ is selfinjective, this is a minimal projective presentation of $\nu X$.
Thus, we have $X\cong\nu X$ if and only if $P_X\cong\nu P_X$ in $\KKb(\proj\La)$.
\end{proof}

The following lemma is useful.

\begin{lemm}\label{nu tau e}
Let $(X,P)$ be a basic $\nu$-stable support $\tau$-tilting pair for $\La$.
Then we have $P\cong\nu P$.
\end{lemm}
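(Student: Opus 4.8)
The plan is to manufacture a second support $\tau$-tilting pair whose module part is again $X$, and then to invoke the uniqueness of the projective summand recorded in the preliminaries: if $(X,P)$ and $(X,P')$ are support $\tau$-tilting pairs for $\La$, then $\add P=\add P'$. The whole argument rests on the observation that applying $\nu$ takes a support $\tau$-tilting pair to a support $\tau$-tilting pair.

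First I would note that, since $\La$ is selfinjective, the Nakayama functor $\nu$ is an exact autoequivalence of $\mod\La$ that restricts to an autoequivalence of $\proj\La$ and preserves minimal projective presentations. Applying $\nu$ to a minimal projective presentation of $X$ thus produces a minimal projective presentation of $\nu X$, which yields $P_{\nu X}\cong\nu P_X$, exactly as established in the proof of Lemma \ref{nu tau-silt}. Consequently $\Phi(\nu X,\nu P)=P_{\nu X}\oplus\nu P[1]\cong\nu(P_X\oplus P[1])=\nu\Phi(X,P)$. Because $\nu$ extends to a triangle autoequivalence of $\KKb(\proj\La)$ that acts termwise, it sends two-term silting complexes to two-term silting complexes; hence $\nu\Phi(X,P)$ is again two-term silting, and since $\Phi$ is a bijection onto $\twosilt\La$ by Theorem \ref{bijections}(a), it follows that $(\nu X,\nu P)$ is a support $\tau$-tilting pair. (Alternatively, one may argue directly that $\nu$ commutes with the AR translation up to isomorphism, so that $\Hom_\La(\nu X,\tau\nu X)\cong\Hom_\La(X,\tau X)=0$ and $\Hom_\La(\nu P,\nu X)\cong\Hom_\La(P,X)=0$, while $\nu P$ remains projective and $|\nu X|+|\nu P|=|\La|$.)

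Now I would feed in the hypothesis that $X$ is $\nu$-stable, i.e. $\nu X\cong X$. Replacing $\nu X$ by the isomorphic module $X$ shows that $(X,\nu P)$ is a support $\tau$-tilting pair. Thus $(X,P)$ and $(X,\nu P)$ are two support $\tau$-tilting pairs sharing the same first component, so uniqueness of the projective part gives $\add P=\add\nu P$, and as both modules are basic this forces $P\cong\nu P$.

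The only real content lies in the first step, namely verifying that $\nu$ carries a support $\tau$-tilting pair to a support $\tau$-tilting pair; once that is secured, the $\nu$-stability of $X$ combined with the uniqueness of the projective summand closes the argument at once.
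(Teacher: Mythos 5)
Your proof is correct, but it takes a genuinely different route from the paper's. The paper argues directly: since $X$ is sincere as a $(\La/\langle e\rangle)$-module and $\nu P$ is projective, it suffices to show $\Hom_\La(\nu P,X)=0$, and this follows at once from $\Hom_\La(\nu P,X)\cong\Hom_\La(\nu P,\nu X)\cong\Hom_\La(P,X)=0$; sincerity then forces $\nu P\in\add P$, whence $\nu P\cong P$. You instead show that $\nu$ carries support $\tau$-tilting pairs to support $\tau$-tilting pairs --- either through the silting detour or via $\nu\tau\cong\tau\nu$ --- and then invoke the uniqueness of the projective complement from \cite{AIR} applied to the pairs $(X,P)$ and $(X,\nu P)$. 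Both arguments are sound, but note that the uniqueness statement you quote is itself proved by exactly the sincerity argument the paper uses, so your proof is a functorial repackaging of the same mechanism: what you gain is the reusable observation that $(X,P)\mapsto(\nu X,\nu P)$ defines an action of $\nu$ on $\sttilt\La$, which makes the lemma transparent; what the paper gains is brevity, since it needs only the single Hom computation and never has to verify $\tau$-rigidity of $\nu X$. One caution about your first route: the fact that $\nu\Phi(X,P)$ lies in $\twosilt\La$ and that $\Phi$ is a bijection does not by itself show that the specific pair $(\nu X,\nu P)$ is support $\tau$-tilting; you must additionally identify the $\Phi$-preimage of $\nu\Phi(X,P)$ with $(\nu X,\nu P)$, using that $P_{\nu X}$ has no direct summands in $\add\La[1]$ (by minimality of the presentation) together with the Krull--Schmidt property of $\KKb(\proj\La)$. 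Your parenthetical direct argument ($\nu$ commutes with $\tau$ for selfinjective $\La$, so rigidity and the Hom-vanishing are preserved, and $|\nu X|+|\nu P|=|\La|$) avoids this issue and is the cleaner way to secure the first step.
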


\begin{proof}
Let $e$ be an idempotent of $\La$ satisfying $\add e\La=\add P$. 
Since $\nu P$ is projective and $X$ is sincere as a $(\La/\langle e\rangle)$-module, 
it is enough to show that $\Hom_\La(\nu P,X)=0$. 
Then, by $X\cong\nu X$, we have 
$\Hom_\La(\nu P,X)\cong\Hom_\La(\nu P,\nu X)\cong\Hom_\La(P,X)=0.$
\end{proof}

Moreover, we give the following observation.

\begin{lemm}\label{silt split}
Let $P:=P'\oplus P''$ be a basic two-term silting complex of $\KKb(\proj\La)$ such that $P''$ is a maximal direct summand of $P$ which belongs to $\add\La[1]$.
If $\nu P\cong P$, then we have $\nu P'\cong P'$ and $\nu P''\cong P''$.
\end{lemm}

\begin{proof}
Since $\La$ is selfinjective, it is obvious.
\end{proof}

Then we give a proof of Theorem \ref{selfinjective-tilt}. 

\begin{proof}[Proof of Theorem \ref{selfinjective-tilt}]
Let $(X,P)$ be a basic $\nu$-stable support $\tau$-tilting pair for $\La$. 
By Lemmas \ref{nu tau-silt} and \ref{nu tau e}, 
we have $P_X\cong\nu P_X$ and $P\cong\nu P$. 
Thus $\nu\Phi(X,P)\cong\Phi(X,P)$ holds. 
Hence by Theorem \ref{tilt eq}, $\Phi(X,P)=P_X\oplus {P}[1]$ is a two-term tilting complex in $\KKb(\proj\La)$. 

Conversely, let $P:=P'\oplus P''$ be a basic two-term tilting complex of $\KKb(\proj\La)$ such that $P''$ is a maximal direct summand of $P$ which belongs to $\add\La[1]$. 
Since $\nu P\cong P$ by Theorem \ref{tilt eq}, we have $\nu P'\cong P'$ by Lemma \ref{silt split}. 
Then, by Lemma \ref{nu tau-silt}, we have $H^0(P')\cong\nu(H^0(P'))$.
Thus, through the bijection Theorem \ref{bijections} (a), 
it gives a $\nu$-stable support $\tau$-tilting $\La$-module.
\end{proof}

Now we will see an example. 

\begin{exam}\label{nakayama}
Let $\La=KQ/I$ be the algebra given by the following quiver $Q$ 
$$\xymatrix@C10pt@R10pt{
 & & 1 \ar[ld]_{a_1}& & \\
 & 2  \ar[ld]_{a_2} & & 6\ar[lu]_{a_6} & \\
3 \ar[rr]_{a_3}& & 4 \ar[rr]_{a_4}  & & 5,\ar[lu]_{a_5}}$$ 
with $I=R^4$, where $R$ denotes the arrow ideal of $KQ$.

In this case, all basic $\nu$-stable support $\tau$-tilting pairs for $\La$ are given as follows. 
\begin{eqnarray*}
(\La,\ 0),\\
(S_1\oplus S_4\oplus P_1\oplus P_4\oplus P_3\oplus P_6,\ 0),
(S_2\oplus S_5\oplus P_2\oplus P_5\oplus P_1\oplus P_4,\ 0),\\
(S_3\oplus S_6\oplus P_3\oplus P_6\oplus P_2\oplus P_5,\ 0),
(S_1\oplus S_4\oplus \begin{smallmatrix}
3\\
4
\end{smallmatrix}\oplus
\begin{smallmatrix}
6\\
1
\end{smallmatrix}\oplus P_3\oplus P_6,\ 0),\\
(S_2\oplus S_5\oplus \begin{smallmatrix}
1\\
2
\end{smallmatrix}\oplus\begin{smallmatrix}
4\\
5
\end{smallmatrix}\oplus P_1\oplus P_4,\ 0),
(S_3\oplus S_6\oplus\begin{smallmatrix}
2\\
3
\end{smallmatrix}\oplus\begin{smallmatrix}
5\\
6
\end{smallmatrix} \oplus P_2\oplus P_5,\ 0),\\
(S_1\oplus S_4\oplus \begin{smallmatrix}
1\\
2
\end{smallmatrix}\oplus
\begin{smallmatrix}
4\\
5
\end{smallmatrix}\oplus P_1\oplus P_4,\ 0),
(S_2\oplus S_5\oplus \begin{smallmatrix}
2\\
3
\end{smallmatrix}\oplus\begin{smallmatrix}
5\\
6
\end{smallmatrix}\oplus P_2\oplus P_5,\ 0),\\
(S_3\oplus S_6\oplus\begin{smallmatrix}
3\\
4
\end{smallmatrix}\oplus\begin{smallmatrix}
6\\
1
\end{smallmatrix} \oplus P_3\oplus P_6,\ 0),\\
(S_1\oplus S_4\oplus\begin{smallmatrix}
3\\
4
\end{smallmatrix}\oplus\begin{smallmatrix}
6\\
1
\end{smallmatrix},\ P_2\oplus P_5),
(S_2\oplus S_5\oplus\begin{smallmatrix}
1\\
2
\end{smallmatrix}\oplus\begin{smallmatrix}
4\\
5
\end{smallmatrix} ,\ P_3\oplus P_6),\\
(S_3\oplus S_6\oplus\begin{smallmatrix}
2\\
3
\end{smallmatrix}\oplus
\begin{smallmatrix}
5\\
6
\end{smallmatrix} ,\ P_1\oplus P_4),
(S_6\oplus S_3\oplus\begin{smallmatrix}
3\\
4
\end{smallmatrix}\oplus\begin{smallmatrix}
6\\
1
\end{smallmatrix} ,\ P_2\oplus P_5),\\
(S_1\oplus S_4\oplus\begin{smallmatrix}
1\\
2
\end{smallmatrix}\oplus\begin{smallmatrix}
4\\
5
\end{smallmatrix} ,\ P_3\oplus P_6),
(S_2\oplus S_5\oplus\begin{smallmatrix}
2\\
3
\end{smallmatrix}\oplus
\begin{smallmatrix}
5\\
6
\end{smallmatrix},\ P_1\oplus P_4),\\
(S_1\oplus S_4,\ P_2\oplus P_3\oplus P_5\oplus P_6),
(S_2\oplus S_5,\ P_1\oplus P_3\oplus P_4\oplus P_6),\\
(S_3\oplus S_6,\ P_1\oplus P_2\oplus P_4\oplus P_5),
(0,\ \La).
\end{eqnarray*}

Then the map $\Phi$ of Theorem \ref{bijections} (a) gives all two-term tilting complexes of $\KKb(\proj\La)$.

For example, $\Phi(S_3\oplus S_6\oplus\begin{smallmatrix}
2\\
3
\end{smallmatrix}\oplus
\begin{smallmatrix}
5\\
6
\end{smallmatrix}, P_1\oplus P_4)$ is the following tilting complex.
$$
\left\{\begin{array}{ccc}
\stackrel{-1}{ P_4\oplus P_1\oplus P_4\oplus P_1}&\stackrel{\left(\begin{smallmatrix}a_3&0&0&0\\ 
0&a_6&0&0\\
0&0&a_2a_3&0\\
0&0&0&a_5a_6
\end{smallmatrix}\right)}{\longrightarrow}&\stackrel{0}{P_3\oplus P_6\oplus P_2\oplus P_5.}\\
\oplus&&\\
P_1\oplus P_4&&
\end{array}\right.
$$ 
Thus, we can obtain all two-term tilting complexes from $\nu$-stable support $\tau$-tilting modules. We refer to \cite{Ad} for calculations of support $\tau$-tilting modules over selfinjective Nakayama algebras. 
\end{exam}

\section{Connection with cluster tilting objects}
In this section, we consider 2-CY tilted algebras and we will show that selfinjective cluster tilting objects correspond bijectively to two-term tilting complexes and $\nu$-stable support $\tau$-tilting modules. Note that Jacobi-finite algebras are 2-CY tilted algebras \cite{Am}. Hence, selfinjective quivers with potentials in the sense of \cite{HI} provide us with a rich source of selfinjective 2-CY tilted algebras.
 
Throughout this section, let $\C$ be a $K$-linear Hom-finite Krull-Schmidt triangulated 2-CY category with a cluster tilting object $T\in\C$. 
We assume that $\La:=\End_\C(T)$ is selfinjective (i.e $T$ is selfinjective) and let $\overline{(\ )}:=\Hom_\C(T,-)$.

The first aim of this section is to prove the following result.

\begin{theorem}\label{tilt-selfct}
The bijection of Theorem \ref{bijections} (c) induces a bijection 
$$\sctilt\C\longleftrightarrow\twotilt\La.$$
\end{theorem}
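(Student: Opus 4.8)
The plan is to show that $\Theta$ carries selfinjective cluster tilting objects exactly onto two-term tilting complexes, using the internal characterizations supplied by Proposition \ref{self ct} and Theorem \ref{tilt eq}. Since $\Theta\colon\ctilt\C\to\twosilt\La$ is already a bijection, it suffices to verify the equivalence
$$X\in\sctilt\C\iff\Theta(X)\in\twotilt\La ,$$
for then $\Theta$ restricts to a bijection between the two subsets. By Proposition \ref{self ct} the left-hand side is equivalent to $X\cong X[2]$, and by Theorem \ref{tilt eq} the right-hand side is equivalent to $\Theta(X)\cong\nu\Theta(X)$. Thus everything reduces to relating the autoequivalence $[2]$ on $\C$ with the Nakayama functor $\nu$ on $\KKb(\proj\La)$ through $\Theta$.

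The heart of the argument is the following compatibility, which I would isolate as a lemma: for every cluster tilting object $X\in\C$ (note $X[2]$ is again cluster tilting, as $[2]$ is an autoequivalence),
$$\nu\bigl(\Theta(X)\bigr)\cong\Theta(X[2]).$$
The standing hypothesis that $T$ is selfinjective gives $T\cong T[2]$ by Proposition \ref{self ct}, hence $\add T=\add T[2]$ and therefore $\add T[-1]=\add T[1]=\add T[3]$. Writing $X=M'\oplus M''$ with $M''$ the maximal summand of $X$ in $\add T[1]$, I would first check that $X[2]=M'[2]\oplus M''[2]$ is again the corresponding decomposition: here $M''[2]\in\add T[3]=\add T[1]$, while $M'[2]$ has no summand in $\add T[1]$ because $M'$ has none and $\add T[1][-2]=\add T[-1]=\add T[1]$. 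Next, applying the autoequivalence $[2]$ to the approximation triangle defining $T_{M'}$ produces the triangle defining $T_{M'[2]}$: its first two terms lie in $\add T[2]=\add T$, and the shifted map remains a minimal right $(\add T)$-approximation. Consequently $T_{M'[2]}$ is obtained from $T_{M'}$ by applying $[2]$ termwise, and Lemma \ref{CT eq} (in the form $\overline{(-)}\circ[2]\cong\nu\circ\overline{(-)}$ on $\add T$) yields $\overline{T_{M'[2]}}\cong\nu\,\overline{T_{M'}}$ together with $\overline{M''[2][-1]}=\overline{M''[1]}\cong\nu\,\overline{M''[-1]}$. Since $\nu$ acts termwise on $\KKb(\proj\La)$ and commutes with $[1]$, assembling these summands gives $\Theta(X[2])\cong\nu\Theta(X)$.

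With the lemma in hand the theorem follows formally. If $X\in\sctilt\C$ then $X\cong X[2]$, so $\Theta(X)\cong\Theta(X[2])\cong\nu\Theta(X)$ and $\Theta(X)$ is tilting by Theorem \ref{tilt eq}. Conversely, if $\Theta(X)\cong\nu\Theta(X)$ then $\Theta(X)\cong\Theta(X[2])$, and injectivity of $\Theta$ forces $X\cong X[2]$, so $X$ is selfinjective by Proposition \ref{self ct}. Hence $\Theta$ maps $\sctilt\C$ bijectively onto $\twotilt\La$.

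I expect the main obstacle to be the lemma, and within it the identification $T_{M'[2]}\cong T_{M'}[2]$: one must check that shifting by $[2]$ sends the minimal right $(\add T)$-approximation for $M'$ to the minimal right $(\add T)$-approximation for $M'[2]$, which is exactly the point where selfinjectivity of $T$ (in the form $\add T=\add T[2]$) is essential. Correctly tracking the maximal $\add T[1]$-summand through the shift is the other delicate point, but it is likewise controlled by $\add T[-1]=\add T[1]=\add T[3]$.
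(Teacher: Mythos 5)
Your proof is correct. It uses the same raw ingredients as the paper --- Proposition \ref{self ct}, Theorem \ref{tilt eq}, Lemma \ref{CT eq}, and selfinjectivity of $T$ in the form $\add T=\add T[2]$ --- but organizes them genuinely differently. The paper argues the two directions separately: given a selfinjective $M=M'\oplus M''$ it splits the isomorphism $M\cong M[2]$ into summandwise isomorphisms via Lemma \ref{ct split}, converts each into $\nu$-stability of the corresponding complex via Lemmas \ref{nu silt-ct} and \ref{CT eq}, and invokes Theorem \ref{tilt eq}; conversely it splits $P\cong\nu P$ via Lemma \ref{silt split} and runs the same equivalences backwards. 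You instead prove the single equivariance statement $\nu\Theta(X)\cong\Theta(X[2])$ for every basic cluster tilting object $X$, and then both directions follow formally from bijectivity of $\Theta$: the forward implication needs only that $\Theta$ is well defined on isomorphism classes, and the backward implication needs only injectivity. This eliminates the splitting lemmas (Lemmas \ref{ct split} and \ref{silt split}) entirely; the only decomposition bookkeeping left is your check that $M'[2]\oplus M''[2]$ is again the canonical decomposition of $X[2]$, which you carry out correctly using $\add T[-1]=\add T[1]=\add T[3]$. The computational core of your lemma --- shifting the minimal approximation triangle by $[2]$ and converting via $\overline{(-)}\circ[2]\cong\nu\circ\overline{(-)}$ --- is exactly what the paper's proof of Lemma \ref{nu silt-ct} does, and the two delicate points you flag (that $f[2]$ remains a minimal right $(\add T)$-approximation because $\add T[2]=\add T$, and that the maximal $\add T[1]$-summand is tracked correctly) are precisely what that proof's ``it is easy to check'' conceals. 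One practical advantage of the paper's route is that Lemmas \ref{ct split}, \ref{silt split} and \ref{nu silt-ct} are reused verbatim in the proof of Theorem \ref{selfinjective-selfct}, whereas your approach would need a companion equivariance $\nu\circ\Psi\cong\Psi\circ[2]$ there; the advantage of yours is that it is more conceptual and makes the word ``induces'' in the statement literally transparent.
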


Recall that, for an object $M\in\C$, we can take a triangle 
$$\xymatrix@C30pt@R30pt{T^1_M\ar[r]^g& T^0_M\ar[r]^{f}&M\ar[r]& T^1_M[1]}$$
where $T^0_M,T^1_M\in\add T$ and $f$ is a minimal right $(\add T)$-approximation.
We denote by $T_M:=({T^1_M}\overset{g}{\to} {T^0_M})$ and we write $T_M\cong T_M[2]$ if there exist isomorphisms $a_0:T_M^0\to T_M^0[2]$ and $a_1:T_M^1\to T_M^1[2]$ 
which make the following diagram commute:

\[\xymatrix{
T_M^1\ar[r]^g\ar[d]_{a_1}&T_M^0\ar[d]^{a_0}\\
T_M^1[2]\ar[r]^{g[2]}&T_M^0[2].
}\]

We start with the following lemma.

\begin{lemm}\label{nu silt-ct}
Let $M$ be a basic object of $\C$. 
The following are equivalent.
\begin{itemize}
\item[(a)]$M\cong M[2]$ in $\C$.
\item[(b)]$\overline{T_M}\cong\nu\overline{T_M}$ in $\KKb(\proj\La)$.
\end{itemize}
\end{lemm}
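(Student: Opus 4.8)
The plan is to reduce the statement to the single equivalence $M\cong M[2]\iff T_M\cong T_M[2]$ and then transport it across the equivalence $\overline{(-)}\colon\add T\to\proj\La$ of Lemma \ref{CT eq}. First I would record the standing consequence of the hypothesis: since $\La=\End_\C(T)$ is selfinjective, Proposition \ref{self ct} gives $T\cong T[2]$, so $\add T=\add T[2]$ and the shift $[2]$ restricts to an autoequivalence of $\add T$. An autoequivalence preserves minimal right $(\add T)$-approximations, so applying $[2]$ to the defining triangle $T^1_M\xrightarrow{g}T^0_M\xrightarrow{f}M\to T^1_M[1]$ produces the defining triangle of $M[2]$. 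Hence $T_{M[2]}\cong T_M[2]$, the two-term complex with terms $T^i_M[2]$ and differential $g[2]$.

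Next I would translate the $\nu$-action into the shift. By Lemma \ref{CT eq} there is an isomorphism of functors $\overline{(-)}\circ[2]\cong\nu\circ\overline{(-)}$ on $\add T$; applying it to each term of $T_M$, and using naturality so that it is compatible with the differential, yields $\nu\overline{T_M}\cong\overline{T_M[2]}\cong\overline{T_{M[2]}}$ in $\KKb(\proj\La)$, where I use $\inj\La=\proj\La$. Thus condition (b) is exactly the assertion $\overline{T_M}\cong\overline{T_{M[2]}}$.

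For the direction (a) $\Rightarrow$ (b) this is then immediate: an isomorphism $M\cong M[2]$ induces an isomorphism of the defining triangles, hence $T_M\cong T_{M[2]}$, to which I apply $\overline{(-)}$ together with the previous paragraph. For (b) $\Rightarrow$ (a) I would use that $\overline{(-)}$, being an equivalence $\add T\to\proj\La$, induces an equivalence $\KKb(\add T)\to\KKb(\proj\La)$; therefore (b) already gives $T_M\cong T_M[2]$ in $\KKb(\add T)$. The step I expect to be the main obstacle is passing from this homotopy isomorphism of complexes back to an isomorphism of the objects $M$ and $M[2]$, since the mapping cone is not functorial on the homotopy category. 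I would resolve this by minimality: because $f$ and $f[2]$ are minimal right approximations, $T_M$ and $T_M[2]$ have no contractible direct summand, so an isomorphism between them in $\KKb(\add T)$ is an honest isomorphism of complexes, that is, a commutative square with vertical isomorphisms $a_1\colon T^1_M\to T^1_M[2]$ and $a_0\colon T^0_M\to T^0_M[2]$ (this is exactly the meaning of $T_M\cong T_M[2]$ fixed above). Completing this square to a morphism between the defining triangles of $M$ and $M[2]$, and invoking the standard fact that a morphism of triangles which is an isomorphism on two terms is an isomorphism on the third, then gives $M\cong M[2]$, as required.
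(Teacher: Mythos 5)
Your proposal is correct and follows essentially the same route as the paper: shift the defining triangle by $[2]$ (using $T\cong T[2]$ from selfinjectivity so that $T_{M[2]}\cong T_M[2]$), reduce (a) to $T_M\cong T_M[2]$, and transport this across the equivalence $\overline{(-)}\circ[2]\cong\nu\circ\overline{(-)}$ of Lemma \ref{CT eq}. The only difference is that you carefully verify what the paper dismisses as ``easy to check''---in particular the passage from an isomorphism in the homotopy category back to a strict isomorphism of complexes via minimality, and the completion to an isomorphism of triangles---which is a faithful elaboration rather than a different argument.
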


\begin{proof}
Take a triangle 
$$\xymatrix@C30pt@R30pt{T^1_M\ar[r]& T^0_M\ar[r]^{f}&M\ar[r]& T^1_M[1]}$$
where $T^0_M,T^1_M\in\add T$ and $f$ is a minimal right $(\add T)$-approximation. 
Then we have the following triangle 
$$\xymatrix@C30pt@R30pt{T^1_M[2]\ar[r]& T^0_M[2]\ar[r]^{f[2]}&M[2]\ar[r]& T^1_M[3].}$$

Then it is easy to check that $M\cong M[2]$ if and only if $T_M\cong T_M[2]$ in $\C$. 
Hence we have $T_M\cong T_M[2]$ in $\C$ if and only if $\overline{T_M}\cong\nu\overline{T_M}$ in $\KKb(\proj\La)$ by Lemma \ref{CT eq}.
\end{proof}

\begin{lemm}\label{ct split}
Let $M=M'\oplus M''$ be a basic cluster tilting object of $\C$ such that $M''$ is a maximal direct summand of $M$ which belongs to $\add T[1]$.
If $M\cong M[2]$, we have $M'\cong M'[2]$ and $M''\cong M''[2]$.
\end{lemm}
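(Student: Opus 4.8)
The plan is to prove Lemma \ref{ct split}, which asserts that the decomposition $M = M' \oplus M''$ of a $2$-periodic cluster tilting object into its non-$\add T[1]$ part $M'$ and its maximal $\add T[1]$ part $M''$ is itself respected by the shift $[2]$. The key structural input is that the shift functor $[2]$ is an autoequivalence of $\C$, so it permutes indecomposable summands; the only real content is that it cannot mix summands lying in $\add T[1]$ with those lying outside it.

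First I would reduce to the level of indecomposables. Since $M \cong M[2]$ and $\C$ is Krull--Schmidt, the autoequivalence $[2]$ induces a permutation $\sigma$ of the (isomorphism classes of) indecomposable direct summands of $M$: each indecomposable summand $N$ satisfies $N[2] \cong N^\sigma$ for some summand $N^\sigma$ of $M$. The claim $M' \cong M'[2]$ and $M'' \cong M''[2]$ is then exactly the assertion that $\sigma$ preserves the partition of these indecomposables into those belonging to $\add T[1]$ and those not. So the heart of the matter is to show that for an indecomposable summand $N$ of $M$, we have $N \in \add T[1]$ if and only if $N[2] \in \add T[1]$.

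The main obstacle — and really the only point requiring thought — is this stability of $\add T[1]$ under $[2]$. Here I would use the hypothesis that $T$ is a \emph{selfinjective} cluster tilting object, so by Proposition \ref{self ct} we have $T \cong T[2]$. Applying the autoequivalence $[2]$ to this gives $T[1] \cong T[1][2]$, hence $\add T[1] = \add (T[1][2]) = (\add T[1])[2]$. Thus $[2]$ restricts to an autoequivalence of the additive subcategory $\add T[1]$, which immediately shows that $N \in \add T[1]$ forces $N[2] \in \add T[1]$, and conversely (applying $[-2]$) that $N[2] \in \add T[1]$ forces $N \in \add T[1]$. This is precisely why the lemma is stated in the selfinjective setting: without $T \cong T[2]$ there would be no reason for $[2]$ to preserve $\add T[1]$.

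Combining these, the permutation $\sigma$ maps $\add T[1]$-summands to $\add T[1]$-summands and non-$\add T[1]$-summands to non-$\add T[1]$-summands. Since $M''$ is by definition the maximal summand of $M$ in $\add T[1]$ and $M'$ collects the remaining indecomposables, we conclude $M''[2] \cong M''$ and $M'[2] \cong M'$ by restricting the global isomorphism $M[2] \cong M$ to each block of $\sigma$. The argument is short precisely because the selfinjectivity assumption does all the work; the only care needed is to phrase the block decomposition of the permutation cleanly so that the restriction of $M \cong M[2]$ to each part is justified.
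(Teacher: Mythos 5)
Your proof is correct and follows essentially the same route as the paper: both arguments rest on Proposition \ref{self ct} giving $T\cong T[2]$, which shows that $[2]$ preserves $\add T[1]$, and then on the Krull--Schmidt property to split the isomorphism $M\cong M[2]$ into $M'\cong M'[2]$ and $M''\cong M''[2]$. The paper merely compresses the permutation bookkeeping you spell out, reducing everything to the single claim $M''[2]\in\add T[1]$, which it verifies by writing $M''\cong T'[1]$ with $T'\in\add T$ exactly as you do implicitly.
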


\begin{proof}
It is enough to show that $M''\cong M''[2].$
We only have to show that $M''[2]\in\add T[1]$.
By the assumption, we have $M''\cong T'[1]$, where $T'\in\add T$. 
Since $\La=\End_\C(T)$ is selfinjective, we have $T\cong T[2]$ by Proposition \ref{self ct}. 
Now we let $T''= T'[2]$, where $T''\in\add T$. 
Then we have $M''[2]\cong (T'[1])[2]\cong (T'[2])[1]\cong T''[1]\in\add T[1]$. 
This completes the proof.
\end{proof}

Then we give a proof of Theorem \ref{tilt-selfct}.

\begin{proof}[Proof of Theorem \ref{tilt-selfct}]
Let $M=M'\oplus M''$ be a basic selfinjective cluster tilting object of $\C$ such that $M''$ is a  maximal direct summand of $M$ which belongs to $\add T[1]$. 
By Proposition \ref{self ct}, we have $M\cong M[2]$ and hence 
we obtain $M'\cong M'[2]$ and $M''\cong M''[2]$ from Lemma \ref{ct split}. 
Then by Lemma \ref{CT eq} and Lemma \ref{nu silt-ct}, 
we have $\overline{T_{M'}}\cong\nu (\overline{T_{M'}})$ and $\overline{M''[-1]}\cong\nu (\overline{M''[-1]})$. 
Thus $\Theta(M)$ is a two-term tilting complex by Theorem \ref{tilt eq}.

Conversely, let $P:=P'\oplus P''$ be a basic two-term tilting complex of $\KKb(\proj\La)$ such that $P''$ is a maximal direct summand of $P$ which belongs to $\add\La[1]$. 
By Theorem \ref{bijections} (c), there exists the corresponding cluster tilting object $M=M'\oplus M''$ such that $M''$ is a maximal direct summand of $M$ which belongs to $\add T[1]$ and 
$P'=\overline{T_{M'}}$ and $P''=\overline{M''[-1]}$. 
On the other hand, by Theorem \ref{tilt eq} and Lemma \ref{silt split}, 
we get $P'\cong\nu P'$ and $P''\cong\nu P''$.
Then, by Lemma \ref{CT eq} and Lemma \ref{nu silt-ct}, 
we have $M'\cong M'[2]$ and $M''\cong M''[2]$.
Thus, by Proposition \ref{self ct}, $M$ is a selfinjective cluster tilting object.
\end{proof}

By Theorems \ref{selfinjective-tilt} and \ref{tilt-selfct}, we have a bijection between 
$\ssttilt\La$ and $\sctilt\C$. 
Here, using the bijection of Theorem \ref{bijections} (d), we give a direct correspondence. 
 
\begin{theorem}\label{selfinjective-selfct}
The bijection of Theorem \ref{bijections} (d) induces a bijection 
$$\sctilt\C\longleftrightarrow\ssttilt\La.$$
\end{theorem}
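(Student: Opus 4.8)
The plan is to exhibit $\Psi$ as the composite $\Phi^{-1}\circ\Theta$ of the two restricted bijections already established, so that no fresh analysis of $\nu$-stability is required. First I would record the compatibility $\Phi\circ\Psi=\Theta$. Writing $M=M'\oplus M''$ with $M''$ the maximal summand lying in $\add T[1]$, we have $\Psi(M)=(\overline{M'},\overline{M''[-1]})$, so that $\Phi(\Psi(M))=P_{\overline{M'}}\oplus\overline{M''[-1]}[1]$, whereas $\Theta(M)=\overline{T_{M'}}\oplus\overline{M''[-1]}[1]$. Hence $\Phi\circ\Psi=\Theta$ reduces to the claim $P_{\overline{M'}}\cong\overline{T_{M'}}$, i.e. that $\overline{T_{M'}}$ is a minimal projective presentation of $\overline{M'}$. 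I would verify this by applying $\overline{(\ )}=\Hom_\C(T,-)$ to the defining triangle $T^1_{M'}\to T^0_{M'}\xrightarrow{f}M'\to T^1_{M'}[1]$: since $T^1_{M'}\in\add T$ we have $\Hom_\C(T,T^1_{M'}[1])=0$, so $\overline{T^1_{M'}}\to\overline{T^0_{M'}}\to\overline{M'}\to0$ is exact, and minimality of the approximation $f$ yields minimality of this presentation.

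Granting the compatibility, the theorem follows at once. By Theorem \ref{tilt-selfct}, $\Theta$ restricts to a bijection $\sctilt\C\to\twotilt\La$, and by Theorem \ref{selfinjective-tilt}, $\Phi$ restricts to a bijection $\ssttilt\La\to\twotilt\La$. Since $\Psi=\Phi^{-1}\circ\Theta$, it carries $\sctilt\C$ bijectively onto $\ssttilt\La$, as desired.

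The alternative is to argue directly that $M$ is selfinjective if and only if $\Psi(M)$ is $\nu$-stable, and I expect the converse of this equivalence to be the main obstacle. The forward direction is clean: if $M$ is selfinjective then $M\cong M[2]$ by Proposition \ref{self ct}, whence $M'\cong M'[2]$ by Lemma \ref{ct split}, whence $\overline{T_{M'}}\cong\nu\overline{T_{M'}}$ by Lemma \ref{nu silt-ct}, and finally $\overline{M'}\cong\nu\overline{M'}$ by Lemma \ref{nu tau-silt}. For the converse, $\nu$-stability of the pair only gives $\nu\overline{M'}\cong\overline{M'}$, hence $M'\cong M'[2]$, but says nothing immediately about the projective part $M''$. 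To recover $M''\cong M''[2]$ I would invoke Lemma \ref{nu tau e} to get $\nu(\overline{M''[-1]})\cong\overline{M''[-1]}$ and then apply the isomorphism $\overline{(\ )}\circ[2]\cong\nu\circ\overline{(\ )}$ of Lemma \ref{CT eq} (together with faithfulness of $\overline{(\ )}$ on $\add T$) to $M''[-1]\in\add T$. Combining $M'\cong M'[2]$ with $M''\cong M''[2]$ gives $M\cong M[2]$, hence selfinjectivity by Proposition \ref{self ct}. Since this bookkeeping for $M''$ is exactly what the composite argument sidesteps, I would present $\Psi=\Phi^{-1}\circ\Theta$ as the main proof.
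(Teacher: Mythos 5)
Your main argument is correct, but it follows a genuinely different route from the paper's. The paper proves the statement directly, in two implications: for a selfinjective $M=M'\oplus M''$ it deduces $M\cong M[2]$ (Proposition \ref{self ct}), splits this as $M'\cong M'[2]$ and $M''\cong M''[2]$ (Lemma \ref{ct split}), and translates into $\nu$-stability of $(\overline{M'},\overline{M''[-1]})$ via Lemmas \ref{CT eq} and \ref{nu silt-ct}; conversely, it translates $\nu$-stability of $(X,P)$ into $M'\cong M'[2]$ via Lemmas \ref{nu tau-silt} and \ref{nu silt-ct}, and handles $M''$ via Lemmas \ref{nu tau e} and \ref{CT eq}. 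This is, almost verbatim, the ``alternative'' direct argument you sketch at the end and then set aside. Your preferred route---proving the compatibility $\Phi\circ\Psi=\Theta$ and writing $\Psi=\Phi^{-1}\circ\Theta$, so that Theorems \ref{selfinjective-tilt} and \ref{tilt-selfct} do all the remaining work---is more formal and arguably cleaner: it isolates the single computation $P_{\overline{M'}}\cong\overline{T_{M'}}$ on which everything rests. Note that this identification is not optional even for the paper's own proof: chaining Lemma \ref{nu tau-silt} (a statement about $P_X$) with Lemma \ref{nu silt-ct} (a statement about $\overline{T_{M'}}$) is only legitimate once $P_{\overline{M'}}\cong\overline{T_{M'}}$ is known, so the paper uses it silently. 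Your proof makes that compatibility explicit; the paper's proof, in exchange, gives a self-contained, concrete correspondence without inverting any maps.

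There is one small gap in your verification of $P_{\overline{M'}}\cong\overline{T_{M'}}$. Right minimality of the approximation $f$ gives that $g$ has no isomorphism as a direct summand, hence that $\overline{g}$ is a radical map, i.e. $\Image\overline{g}\subseteq\rad\overline{T^0_{M'}}$; this makes $\overline{T^0_{M'}}\to\overline{M'}$ a projective cover, but it does not by itself make the presentation minimal. One must also exclude a nonzero direct summand of $\overline{T^1_{M'}}$ contained in $\ker\overline{g}$. Such a summand is of the form $\overline{T_a}$ with $T_a$ a summand of $T^1_{M'}$ and $g|_{T_a}=0$ (by faithfulness of $\overline{(\ )}$ on $\add T$), and then the triangle splits off $T_a\to 0$, forcing $T_a[1]$ to be a direct summand of $M'$. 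This is ruled out precisely because $M'$, by its definition in Theorem \ref{bijections} (c) and (d), has no direct summand in $\add T[1]$---a hypothesis your argument needs but never invokes. With this point added, your composite proof is complete.
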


\begin{proof}
Let $M=M'\oplus M''$ be a basic selfinjective cluster tilting object of $\C$ such that $M''$ is a maximal direct summand of $M$ which belongs to $\add T[1]$. 
By Proposition \ref{self ct}, we have $M\cong M[2]$ and hence 
we obtain $M'\cong M'[2]$ and $M''\cong M''[2]$ from Lemma \ref{ct split}. 
Then by Lemma \ref{CT eq} and Lemma \ref{nu silt-ct}, 
we have $\overline{M'}\cong\nu \overline{M'}$ and $\overline{M''[-1]}\cong\nu \overline{M''[-1]}$. 
Thus $\Psi(M)$ is a $\nu$-stable support $\tau$-tilting pair for $\La$.

Conversely, let $(X,P)$ be a basic $\nu$-stable support $\tau$-tilting pair for $\La$. 
By Theorem \ref{bijections} (d), there exists the corresponding cluster tilting object $M=M'\oplus M''$ such that $M''$ is a maximal direct summand of $M$ which belongs to $\add T[1]$ and 
$X=\overline{M'}$ and $P=\overline{M''[-1]}$.  
By Lemmas \ref{nu tau-silt} and \ref{nu silt-ct}, 
we have $M'\cong M'[2]$. 
On the other hand, by Lemma \ref{nu tau e}, 
we have $P\cong\nu P$ and hence $M''\cong M''[2]$ by Lemma \ref{CT eq}.
Thus we get $M\cong M[2]$ and, by Proposition \ref{self ct}, $M$ is a selfinjective cluster tilting object.
\end{proof}

\begin{exam}\label{exam2}
Let $\La=KQ/I$ be the finite dimensional algebra given by the following quiver $Q$  
$$\xymatrix@C30pt@R30pt{
1\ar[d]_{a_1}    &4\ar[l]_{a_4}  \\
2 \ar[r]_{a_2}  &  3,\ar[u]_{a_3} }$$
with $I=R^3$, where $R$ denotes the arrow ideal of $KQ$. 
In this case, $\La$ is given by the Jacobian algebra $\PP(Q,W)$ for a potential $W=a_1a_2a_3a_4$ (see \cite{DWZ}) and hence it is a 2-CY tilted algebra by \cite{Am}.

Thus, there exists a 2-CY category $\C$ and 
cluster tilting object $T\in\C$ such that 
$\End_\C(T)\cong\La.$ 
Let $T:=T_1\oplus T_2 \oplus T_3\oplus T_4$. Then, for example, $\nu$-stable support $\tau$-tilting $\La$-module $$S_1\oplus S_3\oplus P_1\oplus P_3$$ 
corresponds to the following two-term tilting complex 
$$\left\{\begin{array}{ccc}
\stackrel{-1}{ P_2\oplus P_4}&\stackrel{\left(\begin{smallmatrix} a_1&0\\
0&a_3\\
\end{smallmatrix}\right)}{\longrightarrow}&\stackrel{0}{ P_1\oplus P_3}\\
&&\oplus\\
&&P_1\oplus P_3
\end{array}\right.$$

and to the following selfinjective cluster tilting object 
$${{\rm cone}(f_1)}\oplus{{\rm cone}(f_3)}\oplus{T_1}\oplus{T_3},$$ 
where $f_i$ $(i=1,3)$ is a minimal left $(\add(T/T_i))$-approximation of $T_i$.

\end{exam}

\section{support $\tau$-tilting modules and 2-CY tilted algebras}
In this section, we investigate support $\tau$-tilting modules over 2-CY tilted algebras more closely and provide some nice properties. In \cite{KR}, the authors have shown that 2-CY tilted algebras are Gorenstein of dimension at most 1.  
We give different types of conditions such that algebras are 2-CY tilted in terms of support $\tau$-tilting modules.


Throughout this section, let $\C$ be a $K$-linear Hom-finite Krull-Schmidt triangulated 2-CY category with a cluster tilting object $T\in\C$. We let $\La:=\End_\C(T)$ (not necessarily selfinjective) and $\overline{(\ )}:=\Hom_\C(T,-)$.

First we give the following definition, which is a dual notion of support $\tau$-tilting modules.  

\begin{defi}\label{tau-}\cite{AIR} Let $\La$ be a finite dimensional algebra. 
\begin{itemize}
\item[(a)] We call $X$ in $\mod\Lambda$ \emph{$\tau^-$-rigid} if $\Hom_\Lambda(\tau^-X,X)=0$.
\item[(b)] We call $X$ in $\mod\Lambda$ \emph{$\tau^-$-tilting} if $X$ is $\tau^-$-rigid and $|X|=|\Lambda|$. 
\item[(c)] We call $X$ in $\mod\Lambda$ \emph{support $\tau^-$-tilting} if $X$ is a $\tau^-$-tilting $(\Lambda/\langle e\rangle)$-module for some idempotent $e$ of $\Lambda$.
\end{itemize}
\end{defi}

Clearly $X$ is a $\tau^-$-rigid (respectively, $\tau^-$-tilting, support $\tau^-$-tilting) $\La$-module if and only if $DX$ is a $\tau$-rigid (respectively, $\tau$-tilting, support $\tau$-tilting) $\La^{\op}$-module. 
We denote by $\sttiltm\La$ the set of isomorphism classes of basic support $\tau^-$-tilting $\La$-modules. 

We start with the following lemma, which is an analog of \cite[Proposition 4.3]{AIR}.

\begin{lemm}\label{x=x[2]}
Let $X,Y$ be objects in $\C$. Assume that
there are no nonzero indecomposable direct summands of $T[1]$ for $X$ and $Y$.
\begin{itemize}
\item[(a)]We have $\functor{X[-1]}\cong\tau^-\functor{X}$ as $\La$-modules.
\item[(b)]We have an exact sequence
$$0\to D\Hom_{\Lambda}(\tau^-\functor{Y},\functor{X})\to\Hom_{\C}(X[-1],Y)\to\Hom_{\Lambda}(\tau^-\functor{X},\functor{Y})\to 0.$$
\item[(c)] We have $\Hom_{\C}(X[-1],X)=0$ if and only if $\Hom_\La(\tau^-\functor{X},\functor{X})=0$. 

\end{itemize}
\end{lemm}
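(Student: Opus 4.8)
The plan is to prove (a) first, deduce (b) from it together with $2$-CY (Serre) duality, and then read off (c) by specialization. Throughout I write $\overline{(-)}=\Hom_\C(T,-)$ and use freely that $\overline{(-)}$ is an equivalence $\add T\to\proj\La$, $\add T[2]\to\inj\La$ with $\overline{Z[2]}\cong\nu\overline{Z}$ on $\add T$ (Lemma \ref{CT eq}) and that $\C\to\C/[T[1]]\cong\mod\La$ is the quotient functor (Theorem \ref{equivalence}).

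For (a), I would first establish the companion identity $\overline{Z[1]}\cong\tau\overline{Z}$ for every $Z$ with no summand in $\add T[1]$, and then invert it. Applying $\overline{(-)}$ to the approximation triangle $T_Z^1\xrightarrow{g}T_Z^0\to Z\to T_Z^1[1]$ and using $\Hom_\C(T,T_Z^i[1])=0$ yields a minimal projective presentation $\overline{T_Z^1}\xrightarrow{\overline g}\overline{T_Z^0}\to\overline Z\to 0$; shifting the triangle by $[1]$ and applying $\overline{(-)}$ again, the same vanishing together with $\overline{T_Z^i[2]}\cong\nu\overline{T_Z^i}$ identifies $\overline{Z[1]}$ with $\ker(\nu\overline g\colon\nu\overline{T_Z^1}\to\nu\overline{T_Z^0})$, which is $\tau\overline Z$ by the standard formula $\tau M=\ker(\nu P_1\to\nu P_0)$. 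To pass to (a), I would apply this to $X[-1]$: since $X$ has no $\add T[1]$ summand, $\overline{X[-1]}$ has no projective summand, and (after splitting off any $\add T[2]$ summand of $X$, for which both sides of (a) vanish) I obtain $\tau\overline{X[-1]}\cong\overline X$, whence $\tau^-\overline X\cong\tau^-\tau\overline{X[-1]}\cong\overline{X[-1]}$, using that $\tau^-\tau$ is the identity on modules without projective summands.

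For (b), the right-hand map is the quotient $\C\to\C/[T[1]]$, which realizes a surjection $\Hom_\C(X[-1],Y)\twoheadrightarrow\Hom_\La(\overline{X[-1]},\overline Y)=\Hom_\La(\tau^-\overline X,\overline Y)$ (the last equality by (a)), with kernel the subspace $[T[1]](X[-1],Y)$ of maps factoring through $\add T[1]$. The main work is to identify this kernel with $D\Hom_\La(\tau^-\overline Y,\overline X)$. Here I would use the $2$-CY pairing $\Hom_\C(X[-1],Y)\times\Hom_\C(Y,X[1])\to K$: by cyclicity of the Serre trace, the annihilator of $[T[1]](X[-1],Y)$ is exactly the set $W$ of maps $Y\to X[1]$ that kill every $\beta\colon T'[1]\to Y$ with $T'\in\add T$, so that $[T[1]](X[-1],Y)\cong D\big(\Hom_\C(Y,X[1])/W\big)$. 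The crux is the claim $W=[T[2]](Y,X[1])$: the inclusion $\supseteq$ is immediate from $\Hom_\C(T'[1],T''[2])\cong\Hom_\C(T',T''[1])=0$, while for $\subseteq$ I would factor $f\in W$ through the cone $C$ of a right $\add T[1]$-approximation $T''[1]\to Y$ and observe that $\Hom_\C(\add T[1],C)=0$ forces $\overline{C[-1]}=0$, hence $\tau^-\overline C=0$ by (a), hence $\overline C$ is injective and $C\in\add T[2]$ by Lemma \ref{CT eq}; thus $f$ factors through $\add T[2]$. Finally, shifting by $[-1]$ turns $\Hom_\C(Y,X[1])/[T[2]](Y,X[1])$ into $\Hom_{\C/[T[1]]}(Y[-1],X)=\Hom_\La(\overline{Y[-1]},\overline X)=\Hom_\La(\tau^-\overline Y,\overline X)$, yielding the left-hand term and completing the short exact sequence.

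I expect this last identification — matching $[T[1]](X[-1],Y)$ with $D\Hom_\La(\tau^-\overline Y,\overline X)$, and in particular the equality $W=[T[2]](Y,X[1])$ — to be the main obstacle, since it is precisely where $2$-CY duality and part (a) must be combined. Part (c) then requires no further work: setting $Y=X$ in (b) gives a short exact sequence whose two outer terms are $D\Hom_\La(\tau^-\overline X,\overline X)$ and $\Hom_\La(\tau^-\overline X,\overline X)$, so the middle term $\Hom_\C(X[-1],X)$ vanishes if and only if $\Hom_\La(\tau^-\overline X,\overline X)=0$.
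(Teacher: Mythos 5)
Your proposal is correct, and while its overall skeleton matches the paper's (prove (a) via approximation triangles and the Nakayama functor, prove (b) via the exact sequence $0\to[T[1]](X[-1],Y)\to\Hom_\C(X[-1],Y)\to\Hom_{\C/[T[1]]}(X[-1],Y)\to0$, get (c) by setting $Y=X$), it differs genuinely at both key steps. For (a), the paper works directly with a minimal $(\add T[1])$-approximation triangle $T_1[1]\to T_0[1]\to X\to T_1[2]$: applying $\Hom_\C(T,-)$ produces a minimal injective copresentation $0\to\functor{X}\to\functor{T_1[2]}\to\functor{T_0[2]}$, and applying $\nu^-$ and comparing with $\functor{(-)}$ of the shifted triangle gives $\tau^-\functor{X}\cong\functor{X[-1]}$ in one stroke; you instead prove the companion identity $\functor{Z[1]}\cong\tau\functor{Z}$ from a projective presentation and then invert via $\tau^-\tau$, which is valid but costs you the extra bookkeeping (checking $\functor{X[-1]}$ has no projective summands, splitting off $\add T[2]$ summands of $X$) that the direct computation avoids. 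For (b), the difference is more substantial: the paper obtains the kernel identification $[T[1]](X[-1],Y)\cong D\Hom_{\C/[T[1]]}(Y[-1],X)$ simply by citing \cite[Lemma 3.3]{P}, whereas you reprove that duality from scratch -- computing the annihilator of $[T[1]](X[-1],Y)$ under the Serre pairing via cyclicity of the trace, and showing it equals $[T[2]](Y,X[1])$ by a cone argument that cleverly feeds part (a) back in ($\functor{C[-1]}=0$ forces $\tau^-\functor{C}=0$, hence $\functor{C}$ injective, hence $C\in\add T[2]$). You correctly identified this as the crux: it is precisely the point the paper outsources to Palu. What each approach buys: the paper's proof of (b) is two lines plus a citation; yours is self-contained, makes visible exactly where 2-CY duality is used, and in effect supplies a proof of Palu's lemma in this setting. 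Your minor elisions (minimality of the induced presentations, lifting isomorphisms through the Krull--Schmidt quotient $\C/[T[1]]\simeq\mod\La$) are at the same level of detail the paper itself glosses over.
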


\begin{proof}
(a)  
Since $T[1]$ is also a cluster tilting object, we can take a triangle
\begin{equation}\label{T resolution}
\xymatrix{T_1[1]\ar[r]^{g}&T_0[1] \ar[r]^{f}& X\ar[r]& T_1[2]}
\end{equation}
with a minimal left $(\add (T[1]))$-approximation $f$ and $T_0,T_1\in\add T$.
Applying $\Hom_\C(T,-)$ to \eqref{T resolution}, we have an exact sequence
\begin{equation}\label{min proj resol}
\xymatrix{0\ar[r]& \functor{X}\ar[r]^{}& \functor{T_1[2]}\ar[r]^{\functor{g[1]}}&\functor{T_0[2].}}
\end{equation}

Applying the inverse of Nakayama functor to \eqref{min proj resol} and $\Hom_{\C}(T,-)$ to \eqref{T resolution}, and comparing them by Lemma \ref{CT eq}, we have the following commutative diagram of exact sequences:

\[\xymatrix{
\nu^-\functor{T_1[2]}\ar[r]^{\nu^-\functor{g[1]}}\ar[d]^\wr&\nu^-\functor{T_0[2]}\ar[r]\ar[d]^\wr&\tau^-\functor{X}\ar[r]^{}&0\\
\functor{T_1}\ar[r]^{\functor{g[-1]}}&\functor{T_0}\ar[r]&\functor{X[-1]}\ar[r]^{}&\functor{T_1[1]}=0.
}\]
Thus, we have $\tau^-\functor{X}\cong\functor{X[-1]}$.

(b) We have an exact sequence
\[0\to[T[1]](X[-1],Y)\to\Hom_{\C}(X[-1],Y)\to\Hom_{\C/[T[1]]}(X[-1],Y)\to0,\]
where $[T[1]]$ is the ideal of $\C$ consisting of morphisms which factor through $\add T[1]$. By Theorem \ref{equivalence} and (a), 
we have the following functorial isomorphism
\begin{equation*}\label{X,Y[1]}
\Hom_{\C/[T[1]]}(X[-1],Y)\cong\Hom_\Lambda(\functor{X[-1]},\functor{Y}){\cong}\Hom_\Lambda(\tau^-\functor{X},\functor{Y}).
\end{equation*}
Moreover, using \cite[Lemma 3.3]{P}, we have the following functorial isomorphism 
\[[T[1]](X[-1],Y)\cong D\Hom_{\C/[T[1]]}(Y[-1],X)\cong D\Hom_\Lambda(\tau^-\functor{Y},\functor{X}).\]
Thus the assertion follows.

(c) This is immediate from (b).
\end{proof}

Then, we have the following conclusion.

\begin{theorem}\label{tau=tau-}
Let $\La$ be a 2-CY tilted algebra. Then,  
support $\tau$-tilting $\La$-modules coincide with support $\tau^-$-tilting $\La$-modules. \end{theorem}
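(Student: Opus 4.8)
The plan is to reduce both notions to a single description consisting of a rigidity condition together with a ``support'' condition on composition factors, and then to show that over a $2$-CY tilted algebra the two rigidity conditions agree.

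First I would reformulate both notions intrinsically over $\La$, avoiding the quotients $\La/\langle e\rangle$. By \cite[Proposition 2.3]{AIR} together with the definition of a support $\tau$-tilting pair, a basic $X$ is support $\tau$-tilting if and only if $X$ is $\tau$-rigid over $\La$ and there is a basic projective $P$ with $\Hom_\La(P,X)=0$ and $|X|+|P|=|\La|$. Dually, using that $X$ is support $\tau^-$-tilting over $\La$ if and only if $DX$ is support $\tau$-tilting over $\La^{\op}$, a basic $X$ is support $\tau^-$-tilting if and only if $X$ is $\tau^-$-rigid over $\La$ and there is a basic injective $I$ with $\Hom_\La(X,I)=0$ and $|X|+|I|=|\La|$.

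Next I would note that the two support conditions are literally the same. Writing $P_i=e_i\La$ and $I_i=D(\La e_i)$, the standard identities $\Hom_\La(P_i,X)\cong Xe_i\cong D\Hom_\La(X,I_i)$ show that $\Hom_\La(P_i,X)=0$ if and only if $\Hom_\La(X,I_i)=0$, for each $i$. Hence, choosing $P$ and $I$ supported on the same set of vertices, there is a projective $P$ with $\Hom_\La(P,X)=0$ and $|X|+|P|=|\La|$ if and only if there is an injective $I$ with $\Hom_\La(X,I)=0$ and $|X|+|I|=|\La|$.

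The core of the argument is then to prove that over $\La=\End_\C(T)$ a module $X$ is $\tau$-rigid if and only if it is $\tau^-$-rigid. Writing $X\cong\functor{M}$ for some $M\in\C$ with no direct summand in $\add T[1]$ (possible by Theorem \ref{equivalence}), Lemma \ref{x=x[2]}(c) gives that $X$ is $\tau^-$-rigid if and only if $\Hom_\C(M[-1],M)=0$; the $2$-CY isomorphism $\Hom_\C(M[-1],M)\cong D\Hom_\C(M,M[1])$ shows this is equivalent to $\Hom_\C(M,M[1])=0$, which by \cite[Proposition 4.3]{AIR} holds if and only if $\functor{M}$ is $\tau$-rigid. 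Combining the three paragraphs, the common descriptions of the two classes agree, and the theorem follows. The main obstacle is precisely this passage through the cluster category to identify $\tau$- and $\tau^-$-rigidity; once Lemma \ref{x=x[2]} and the $2$-CY duality are available it is immediate, while the remaining bookkeeping with supports and summand counts is routine.
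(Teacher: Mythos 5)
Your proof is correct, and its core is the same computation as the paper's: both identify $\tau$-rigidity with $\tau^-$-rigidity by passing to $\C$ and combining \cite[Proposition 4.3]{AIR}, the 2-CY duality $\Hom_\C(M[-1],M)\cong D\Hom_\C(M,M[1])$, and Lemma \ref{x=x[2]}(c). The packaging, however, differs. The paper lifts a support $\tau$-tilting module to a cluster tilting object $M=M'\oplus M''$ via the bijection of Theorem \ref{bijections}(d) and runs the computation on $M'$; since this treats only one direction, it then handles the converse by passing to $\La^{\op}$ and $DX$. You instead lift an arbitrary module $X\cong\functor{M}$ via the equivalence of Theorem \ref{equivalence}, which proves the stronger intermediate statement that $\tau$-rigid and $\tau^-$-rigid modules coincide over any 2-CY tilted algebra; combined with your (correct) observation that the two ``support'' conditions agree---$\Hom_\La(e_i\La,X)\cong Xe_i\cong D\Hom_\La(X,D(\La e_i))$---this yields both inclusions simultaneously, with no need for the opposite-algebra trick. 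You also make explicit the bookkeeping, via \cite[Proposition 2.3]{AIR} and its dual, needed to pass between ``support $\tau$-tilting'' and ``$\tau$-rigid plus a support and counting condition,'' which the paper leaves implicit when it concludes that $X$ is support $\tau^-$-tilting from the rigidity statement alone. The paper's route is shorter given the bijections it has already established; yours is more symmetric, slightly more general at the level of rigid modules, and more careful about the reduction, at the cost of invoking the dual of \cite[Proposition 2.3]{AIR}, which is routine.
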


\begin{proof}
Let $X$ be a support $\tau$-tilting $\La$-module. 
By Theorem \ref{bijections} (d), there exists the corresponding cluster tilting object $M=M'\oplus M''$ such that $M''$ is a maximal direct summand of $M$ which belongs to $\add T[1]$ and $X=\overline{M'}$. 
By \cite[Proposition 4.3]{AIR}, we have $\Hom_\La(X,\tau X)=0$ if and only if $\Hom_\C(M',M'[1])=0$. Hence we have $\Hom_\C(M'[-1],M')=0$. 
Moreover, by Lemma \ref{x=x[2]}, we have $\Hom_\C(M'[-1],M')=0$ if and only if $\Hom_\La(\tau^-X, X)=0$. 
Thus $X$ is a support $\tau^-$-tilting $\La$-module.
Conversely, assume that $X$ is a support $\tau^-$-tilting $\La$-module. 
Since $DX$ is a support $\tau$-tilting $\La^{\op}$-module and $\La^{\op}$ is clearly 2-CY tilted, we can prove similarly that $DX$ is a support $\tau^-$-tilting $\La^{\op}$-module. Thus,  
$X$ is a support $\tau$-tilting $\La$-module.
\end{proof}


Moreover, $\nu$-stable support $\tau$-tilting modules have the following strong property if $\La$ is selfinjective.

\begin{prop}\label{2-CY}
Let $\La$ be a selfinjective 2-CY tilted algebra.
For any basic $\nu$-stable support $\tau$-tilting $\La$-module $X$, 
we have $\tau^-X\cong\tau X$ as $\La$-modules. 
\end{prop}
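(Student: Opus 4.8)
The plan is to transport the whole question into the $2$-CY category $\C$ by means of the bijection of Theorem \ref{selfinjective-selfct}, where both $\tau X$ and $\tau^-X$ acquire clean descriptions in terms of the shift functor $[1]$, and where the selfinjectivity hypothesis on $\La$ turns into the periodicity relation $M\cong M[2]$ on the corresponding cluster tilting object. The point is that once $\tau^-X$ and $\tau X$ are identified with $\overline{M'[-1]}$ and $\overline{M'[1]}$ respectively, the selfinjectivity collapses the two shifts $[-1]$ and $[1]$ into one another, forcing the two translates to coincide.

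Concretely, I would first apply Theorem \ref{selfinjective-selfct} to write $X=\overline{M'}$, where $M=M'\oplus M''$ is the basic selfinjective cluster tilting object in $\sctilt\C$ corresponding to $X$ and $M''$ is the maximal direct summand of $M$ lying in $\add T[1]$. Since $M$ is selfinjective, Proposition \ref{self ct} gives $M\cong M[2]$, and then Lemma \ref{ct split} yields $M'\cong M'[2]$; applying the autoequivalence $[-1]$ to this isomorphism gives $M'[-1]\cong M'[1]$, hence $\overline{M'[-1]}\cong\overline{M'[1]}$. Next I would compute the two translates: because $M'$ has no nonzero indecomposable summand in $\add T[1]$, Lemma \ref{x=x[2]}(a) applies and gives $\tau^-X=\tau^-\overline{M'}\cong\overline{M'[-1]}$, while the $\tau$-version of this statement, namely \cite[Proposition 4.3]{AIR} (of which Lemma \ref{x=x[2]} is the analog), gives $\tau X=\tau\overline{M'}\cong\overline{M'[1]}$. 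Combining these with $\overline{M'[-1]}\cong\overline{M'[1]}$ from the previous step yields $\tau^-X\cong\tau X$, as required.

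The steps that need the most care are the applicability conditions rather than any computation: one must check that $M'$ meets the hypothesis ``no nonzero summand in $\add T[1]$'' (which is immediate from the maximality of $M''$) so that both Lemma \ref{x=x[2]}(a) and its $\tau$-analog \cite[Proposition 4.3]{AIR} may legitimately be invoked for $M'$. I expect the genuine crux to be the correct conversion of the selfinjectivity of $\La$ into the periodicity $M'\cong M'[2]$, since this is exactly where the interaction between selfinjectivity of $\End_\C(T)$ and the $2$-CY structure is used: it rests on $T\cong T[2]$ via Proposition \ref{self ct} and the splitting Lemma \ref{ct split}, and it is this periodicity alone that makes $\overline{M'[-1]}$ and $\overline{M'[1]}$ agree.
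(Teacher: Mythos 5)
Your proposal is correct and follows essentially the same route as the paper: transport $X$ to the selfinjective cluster tilting object $M=M'\oplus M''$ via Theorem \ref{selfinjective-selfct}, establish the periodicity $M'\cong M'[2]$, and then identify $\tau X\cong\overline{M'[1]}$ and $\tau^-X\cong\overline{M'[-1]}$ via \cite[Proposition 4.3]{AIR} and Lemma \ref{x=x[2]}(a). The only (immaterial) difference is how the periodicity is obtained: you deduce $M'\cong M'[2]$ from selfinjectivity of $M$ via Proposition \ref{self ct} and Lemma \ref{ct split}, whereas the paper deduces it from $\nu$-stability of $X$ via Lemmas \ref{nu tau-silt} and \ref{nu silt-ct}; both are valid one-step appeals to results already established.
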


\begin{proof}
By Theorem \ref{selfinjective-selfct}, there exists the corresponding selfinjective cluster tilting object $M=M'\oplus M''$ such that $M''$ is a maximal direct summand of $M$ which belongs to $\add T[1]$ and $X=\overline{M'}$. 
By Lemmas \ref{nu tau-silt} and \ref{nu silt-ct}, we obtain $M'\cong M'[2]$.
Then, by \cite[Proposition 4.3]{AIR} and Lemma \ref{x=x[2]},  
we get $\tau X\cong\functor{M'[1]}\cong\functor{M'[-1]}\cong\tau^- X$.
\end{proof}

\begin{exam}
Let $\La$ be the algebra given in Example \ref{exam2}. 
Then the AR quiver is given as follows.

\[
\begin{xy}
(0,0) *{_{1}}="A" ,
(7,7) *{_{\begin{smallmatrix}4\\1\end{smallmatrix}}}="B" ,
(0,14) *{_{\begin{smallmatrix}4\\1\\2\end{smallmatrix}}}="BBB" ,
(14,14) *{_{\begin{smallmatrix}3\\4\\1\end{smallmatrix}}}="BB" ,
(14,0) *{_{4}}="C" ,
(21,7) *{_{\begin{smallmatrix}3\\4\end{smallmatrix}}}="D" ,
(28,14) *{_{\begin{smallmatrix}2\\3\\4\end{smallmatrix}}}="DD" ,
(28,0) *{_{3}}="E" ,
(35,7) *{_{\begin{smallmatrix}2\\3\end{smallmatrix}}}="F" ,
(42,14) *{_{\begin{smallmatrix}1\\2\\3\end{smallmatrix}}}="FF" ,
(42,0) *{_{2}}="G" ,
(49,7) *{_{\begin{smallmatrix}1\\2\end{smallmatrix}}}="H" ,
(56,14) *{_{\begin{smallmatrix}4\\1\\2\end{smallmatrix}}}="HH" ,
(56,0) *{_{1}}="I" ,

\ar@{.} "A" ; "BBB"
\ar "BBB" ; "B"
\ar "A" ; "B"
\ar "B" ; "C" 
\ar "C" ; "D"
\ar "D" ; "E"
\ar "E" ; "F"
\ar "F" ; "G"
\ar "G" ; "H"
\ar "B" ; "BB"
\ar "BB" ; "D"
\ar "D" ; "DD"
\ar "DD" ; "F"
\ar "F" ; "FF"
\ar "FF" ; "H"
\ar "H" ; "HH"
\ar "H" ; "I"
\ar@{.} "HH" ; "I"
\end{xy}
\]
Then it is easy to check that $\sttilt\La=\sttiltm\La$. 
On the other hand, for example, $X=S_1\oplus S_3\oplus P_1\oplus P_3$ is a $\nu$-stable $\tau$-tilting $\La$-module and we can see that $\tau X=\tau^-X$.
\end{exam}


\begin{exam}
Let $\Gamma$ be a preprojective algebra of Dynkin quiver of $A_3$ 
$$\xymatrix{1\ar[r]&2\ar[r]\ar@<-1ex>[l]&3\ar@<-1ex>[l]}.$$
Then it is known that $\Gamma$ is selfinjective. 
For example, we have a $\nu$-stable support $\tau$-tilting $\La$-module $X=P_2\oplus\begin{smallmatrix}
2\\
3
\end{smallmatrix}\oplus
\begin{smallmatrix}
2\\
1
\end{smallmatrix}$ and it is easy to check that $\tau X\ncong\tau^-X$. 
Hence, we can immediately conclude that $\Gamma$ is not 2-CY tilted.
\end{exam}

\section*{Acknowledgement}
First and foremost, the author would like to thank Osamu Iyama for his support and patient guidance. He is grateful to Kota Yamaura for his kind support and Takahide Adachi for his  valuable comments.
He thanks Aaron Chan for a correction of example \ref{nakayama}. 
The author is grateful to the anonymous referee for the valuable comments.


\end{document}